\newtheorem{theorem}{Theorem}
\newtheorem{lemma}{Lemma}
\theoremstyle{remark}
\newtheorem{remark}{Remark}
\theoremstyle{definition}
\newcommand{\mbb}[1]{\mathbb{#1}}  
\begin{document} 
\title{On Detection and Structural Reconstruction of Small-World Random Networks}

\date{}
\author[1]{T. Tony Cai}
\author[1]{Tengyuan Liang}
\author[1]{Alexander Rakhlin}

\affil[1]{Department of Statistics, The Wharton School, University of Pennsylvania} 

\renewcommand\Authands{ and }

\maketitle 
\begin{abstract}
In this paper, we study detection and fast reconstruction of the celebrated Watts-Strogatz (WS) small-world random graph model  \citep{watts1998collective} which aims to describe real-world complex networks that exhibit both high clustering and short average length properties. The WS model with neighborhood size $k$ and rewiring probability probability $\beta$ can be viewed as a continuous interpolation between a deterministic ring lattice graph and the Erd\H{o}s-R\'{e}nyi  random graph. We study both the computational and statistical aspects of detecting the deterministic ring lattice structure (or local geographical links, strong ties) in the presence of random connections (or long range links, weak ties), and for its recovery. The phase diagram in terms of $(k,\beta)$ is partitioned into several regions according to the difficulty of the problem. We propose distinct methods for the various regions.

\end{abstract}

\section{Introduction} 
The ``small-world'' phenomenon aims to describe real-world complex networks that exhibit both high clustering and short average length properties. While most of the pairs of nodes are not friends, any node can be reached from another in a small number of hops. The Watts-Strogatz (WS) model, introduced in \citep{watts1998collective, newman1999scaling}, is a popular generative model for networks that exhibit the small-world phenomenon. The WS model interpolates between the two extremes---the regular lattice graph on the one hand, and the random graph on the other. Considerable effort has been spent on studying the asymptotic statistical behavior (degree distribution, average path length, clustering coefficient, etc.) and the empirical performance of the WS model \citep{watts1999small, amaral2000classes,barrat2000properties,latora2001efficient, van2009random}. Successful applications of the WS model have been found in a range of disciplines, such as psychology \citep{milgram1967small},  epidemiology \citep{moore2000epidemics}, medicine and health \citep{stam2007small}, to name a few. 
In one of the first algorithmic studies of the small-world networks, \cite{kleinberg2000small} investigated the theoretical difficulty of finding the shortest path between any two nodes when one is restricted to use local algorithms, and further related the small-world notion to long range percolation on graphs \citep{benjamini2000diameter,coppersmith2002diameter}. 
The focus of the present paper is on statistical and computational aspects of the detection and recovery problems.

Given a network, the first statistical and computational challenge is to detect whether it enjoys the small world property, or whether the observation may be explained by the Erd\H{o}s-R\'{e}nyi random graph (the null hypothesis). The second question is concerned with the reconstruction of the neighborhood  structure if the network does exhibit the small world phenomenon. In the language of social network analysis, the detection problem corresponds to detecting the existence of local geographical links (or close friend connections, strong ties) in the presence of long range links (or random connections, weak ties). The reconstruction problem corresponds to distinguishing between these local links and long range links. The problem is statistically and computationally difficult due to the high-dimensional unobserved latent variable---the permutation matrix--- which blurs the natural ordering of the ring structure. 

Let us parametrize the WS model in the following way: the number of nodes is denoted by $n$, the neighborhood size by $k$, and the rewiring probability by $\beta$. Provided the adjacency matrix $A \in \mathbb{R}^{n\times n}$, we are interested in identifying the choices of $(n,k,\beta)$ when detection and reconstruction of the small world random graph is possible. Specifically, we focus on the following two questions. 

\medskip
\noindent {\bf Detection}~~ Given the adjacency matrix $A$ up to a permutation, when (in terms of $n,k,\beta$) and how (in terms of procedures) can one statistically distinguish whether it is a small world graph ($\beta < 1$), or a usual random graph with matching degree ($\beta = 1$). What if we restrict our attention to computationally efficient procedures? 

\medskip
\noindent {\bf Reconstruction}~~ Once the presence of the neighborhood structure is confirmed, when (in terms of $n,k,\beta$) and how (in terms of procedures) can one estimate the deterministic neighborhood structure? If one only aims to estimate the structure consistently (asymptotically correct), are there computationally efficient procedures, and what are their limitations?

\medskip
We address the above questions by presenting a phase diagram in  Figure~\ref{fig:phase.tran}. The phase diagram divides the parameter space into four disjoint regions according to the difficulty of the problem. We propose distinct methods for the regions where solutions are possible.



\subsection{Why Small World Graph?}
Finding and analyzing the appropriate statistical models for real-world complex networks is one of the main themes in network science. Many real empirical networks---for example, internet architecture, social networks, and biochemical pathways---exhibit two features simultaneously: high clustering among individual nodes and short distance between any two nodes. Consider the local tree rooted at a person. The high clustering property suggests prevalent existence of triadic closure, which significantly reduces the number of reachable people within a certain depth (in contrast to the regular tree case where this number grows exponentially with the depth), contradicting the short average length property. In a pathbreaking paper, \cite{watts1998collective} provided a  mathematical model that resolves the above seemingly contradictory notions.
The solution is surprisingly simple --- interpolating between structural ring lattice graph and a random graph. The ring lattice provides the strong ties (i.e., homophily, connection to people who are similar to us) and triadic closure, while the random graph generates the weak ties (connection to people who are otherwise far-away), preserving the local-regular-branching-tree-like structure that induces short paths between pairs. 

Given the small world model, it is natural to ask the statistical question of distinguishing the local links (geographical) and long range links (non-geographical) based on the observed graph.

\subsection{Rewiring Model} 
Let us now define the WS model. Consider a ring lattice with $n$ nodes, where each node is connected with its $k$ nearest neighbors ($k/2$ on the left and $k/2$ on the right, $k$ even for convenience). The rewiring process contains two procedures: erase and reconnect. First, erase each currently connected edge with probability $\beta$, independently. Next, reconnect each edge pair with probability $\beta \frac{k}{n-1}$, allowing multiplicity.\footnote{The original rewiring process in \cite{watts1998collective} does not allow multiplicity; however, for the simplicity of technical analysis, we focus on reconnection allowing multiplicity. These two rewiring processes are asymptotically equivalent. } The observed symmetric adjacency matrix $A \in \{ 0 ,1 \}^{n \times n}$ has the following structure under some unobserved permutation matrix $P_\pi \in \{ 0, 1 \}^{n \times n}$. For $1\leq i <j \leq n$,
\begin{equation*}
	[P_\pi A P_\pi^T]_{ij} = 1 ~~\left\{ 
	\begin{array}{ll}
		\text{w.p.}~~ 1- \beta (1-\beta\frac{k}{n-1}), & \text{if}~ 0<|i-j| \leq \frac{k}{2} \mod n-1-\frac{k}{2} \\
		\text{w.p.}~~ \beta \frac{k}{n-1}, & \text{otherwise} 
	\end{array}
	\right. 
\end{equation*}
and entries are independent of each other. Equivalently, we have for $1\leq i <j \leq n$ 
\begin{align}
	 \label{eq:model}
	 A_{ij} = \kappa \left([P_\pi B P_\pi^T]_{ij}\right),
\end{align}
where $\kappa(\cdot)$ is the entry-wise i.i.d. Markov channel, 
\begin{align*}
	\kappa(0) &\sim {\sf Bernoulli}\left(\beta \frac{k}{n-1}\right), \\
	\kappa(1) &\sim {\sf Bernoulli}\left(1- \beta (1-\beta\frac{k}{n-1})\right),
\end{align*}
and $B \in \{ 0,1 \}^{n \times n}$ indicates the support of the structural ring lattice
\begin{equation}
	\label{eq:structure.mat}
	B_{ij} = ~~\left\{ 
	\begin{array}{ll}
		1, & \text{if}~ 0<|i-j| \leq \frac{k}{2} \mod n-1-\frac{k}{2} \\
		0, & \text{otherwise} 
	\end{array}
	\right.  .
\end{equation}

We denote by ${\sf WS}(n, k, \beta)$ the distribution of the random graph generated from the rewiring model, and denote by ${\sf ER}(n, \frac{k}{n-1})$ the Erd\H{o}s-R\'{e}nyi random graph distribution (with matching average degree $k$). Remark that if $\beta = 1$, the small world graph ${\sf WS}(n, k, \beta)$ reduces to ${\sf ER}(n, \frac{k}{n-1})$, with no neighborhood structure. In contrast, if $\beta=0$, the small world graph ${\sf WS}(n, k, \beta)$ corresponds to the deterministic ring lattice, without random connections. 
We focus on the dependence of the gap $1-\beta = o(1)$ on $n$ and $k$, such that distinguishing between ${\sf WS}(n, k, \beta)$ and ${\sf ER}(n, \frac{k}{n-1})$ or reconstructing the ring lattice structure is statistically and computationally possible.

\subsection{Summary of Results} 
\label{sec:sum.result}
The main theoretical and algorithmic results are summarized in this section. We first introduce several regions in terms of $(n, k, \beta)$,  according to the difficulty of the problem instance, and then we present the results using the phase diagram in  Figure~\ref{fig:phase.tran}. Except for the impossible region, we will introduce different algorithms with distinct computational properties.

\medskip

\noindent {\bf Impossible region}: $1 - \beta \prec \sqrt{\frac{\log n}{n}} \vee \frac{\log n}{k}$. Inside this region no multiple testing procedure (regardless of computational budget) can succeed in distinguishing among the class of models including all of ${\sf WS}(n, k, \beta)$ and ${\sf ER}(n, \frac{k}{n-1})$ with vanishing error.

\medskip
 
\noindent {\bf  Hard region}: $\sqrt{\frac{\log n}{n}} \vee \frac{\log n}{k} \preceq 1 - \beta \prec  \sqrt{ \frac{1}{k} } \vee  \frac{\sqrt{\log n}}{k}$. It is possible to detect between ${\sf WS}(n, k, \beta)$ and ${\sf ER}(n, \frac{k}{n-1})$ statistically with vanishing error; however the evaluation of the test statistic \eqref{eq:max.lik.test} (below) requires exponential time complexity (to the best of our knowledge).

\medskip

\noindent {\bf Easy region}: $ \sqrt{ \frac{1}{k} } \vee  \frac{\sqrt{\log n}}{k} \preceq 1 - \beta \preceq  \sqrt{ \sqrt{\frac{\log n}{n}} \vee \frac{\log n}{k} }$. There exists an efficient spectral test that can distinguish between the small world random graph ${\sf WS}(n, k, \beta)$ and the  Erd\H{o}s-R\'{e}nyi graph ${\sf ER}(n, \frac{k}{n-1})$ in near linear time (in the matrix size). 

\medskip

\noindent {\bf Reconstructable region}: $\sqrt{ \sqrt{\frac{\log n}{n}} \vee \frac{\log n}{k} }  \prec 1 - \beta \preceq 1$. In this region, not only is it possible to detect the existence of the lattice structure in a small-world graph, but it is also possible  computationally to consistently estimate/reconstruct the neighborhood structure via a novel correlation thresholding procedure. 

\medskip

The following phase diagram provides an intuitive illustration of the above theoretical results. If we parametrize $k \asymp n^{x}, 0 <x<1$ and $1 - \beta \asymp n^{-y}, 0<y<1$, each point $(x,y) \in [0,1]^2$ corresponds to a particular problem instance with parameter bundle $(n, k=n^x, \beta = 1 - n^{-y})$. According to the location of $(x,y)$, the difficulty of the problem changes; for instance, the larger the $x$ and the smaller the $y$ is, the easier the problem becomes. The various above regions (plotted in $[0,1]^2$) are: impossible region ({\color{red} red} region \uppercase\expandafter{\romannumeral1}), hard region ({\color{blue} blue} region \uppercase\expandafter{\romannumeral2}), easy region ({\color{green} green} region \uppercase\expandafter{\romannumeral3}), reconstructable region ({\color{cyan} cyan} region \uppercase\expandafter{\romannumeral4}). 
\begin{figure}[H] 
	\centering 
	\includegraphics[width=3in]{./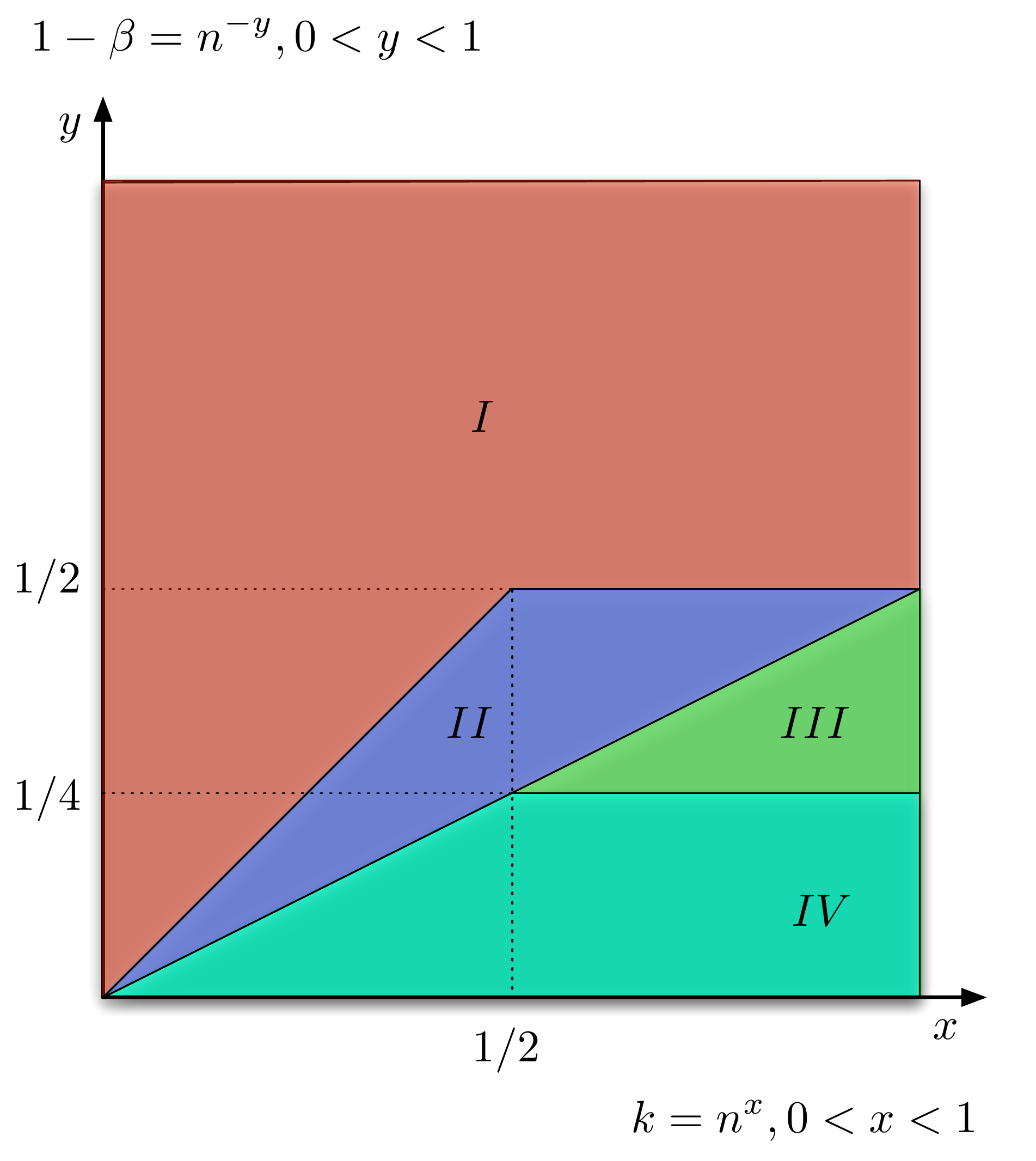} 
	\caption{Phase diagram for small-world network: impossible region ({\color{red} red} region \uppercase\expandafter{\romannumeral1}), hard region ({\color{blue} blue} region \uppercase\expandafter{\romannumeral2}), easy region ({\color{green} green} region \uppercase\expandafter{\romannumeral3}), and reconstructable region ({\color{cyan} cyan} region \uppercase\expandafter{\romannumeral4}).}
	\label{fig:phase.tran}
\end{figure}

 \subsection{Notation}
 $A, B, Z \in \mathbb{R}^{n\times n}$ denote symmetric matrices: $A$ is the adjacency matrix, $B$ is the structural signal matrix as in Equation~\eqref{eq:structure.mat}, and $Z=A - \mathbb{E}A$ is the noise matrix. We denote the matrix of all ones by $J$. Notations $\preceq$, $\succeq$, $\prec$, $\succ$ denote the asymptotic order: $a(n) \preceq b(n)$ if and only if $\limsup\limits_{n\rightarrow \infty} \frac{a(n)}{b(n)} \leq c$, with some constant $c>0$, $a(n) \prec b(n)$ if and only if $\limsup\limits_{n\rightarrow \infty} \frac{a(n)}{b(n)} = 0$. $C, C'>0$ are universal constants that may change from line to line. For a symmetric matrix $A$, $\lambda_i(A), 1\leq i\leq n$ denotes the ranked eigenvalues, in a decreasing order. The inner-product $\langle A,B \rangle = {\rm tr}(A^T B)$ overloads the usual Euclidian inner-product and matrix inner-product. For any integer $n$, $[n]:=\{0, 1, \ldots, n-1\}$ denotes the index set. Denote the permutation in symmetric group $\pi \in S_n$ and its associated matrix form as $P_\pi$.

For a graph $G(V,E)$ generated from the Watts-Strogatz model ${\sf WS}(n, k, \beta)$ with associated permutation $\pi$, for each node $v_i \in V, 1\leq i\leq |V|$, we denote $$\mathcal{N}(v_i):= \left\{ v_j : 0<|\pi^{-1}(i)-\pi^{-1}(j)| \leq \frac{k}{2} \mod n-1-\frac{k}{2} \right\}$$ as the ring neighborhood nodes with respect to node $v_i$, before the permutation $\pi$ applied.

\subsection{Organization of the Paper}
	The following sections are dedicated to the theoretical justification of the various regions in Section~\ref{sec:sum.result}. Specifically, Section~\ref{sec:region.1} establishes the boundary for the impossible region \uppercase\expandafter{\romannumeral1}, where the problem is impossible to solve information theoretically. We contrast the hard region \uppercase\expandafter{\romannumeral2} with the regions \uppercase\expandafter{\romannumeral3} and \uppercase\expandafter{\romannumeral4} in Section~\ref{sec:region.2.3}; here, the difference arises in   statistical and computational aspects of detecting the strong tie structure inside random graph. Section~\ref{sec:region.4} studies a  correlation thresholding algorithm that reconstructs the neighborhood structure consistently when the parameters lie within the reconstructable region \uppercase\expandafter{\romannumeral4}. We also study a spectral ordering algorithm which succeeds in reconstruction in a part of region \uppercase\expandafter{\romannumeral3}. Whether the remaining part of region \uppercase\expandafter{\romannumeral3} admits a recovery procedure is an open problem. Additional further directions are listed in Section~\ref{sec:discuss}.

\section{The Impossible Region: Lower Bounds}
\label{sec:region.1}

We start with an information theoretic result that describes the difficulty of distinguishing among a class of models. The following Theorem~\ref{thm:low.bound} characterizes the impossible region, as in Section~\ref{sec:sum.result}, in the language of minimax multiple testing error. The proof is postponed to Section~\ref{sec:proof}. 

\begin{theorem}[Impossible Region]
	\label{thm:low.bound}
	Consider the following statistical models: $\mathcal{P}_0$ denotes the probability measure of the Erd\H{o}s-R\'{e}nyi random graph ${\sf ER}(n, \frac{k}{n-1})$, and $\mathcal{P}_\pi, \pi \in S_{n-1}$ denote the probability measures of the Watts-Strogatz small-world graph ${\sf WS}(n, k, \beta)$ as in Equation~\eqref{eq:model} with different permutations $\pi$. Consider any selector $\phi: \{0,1\}^{n\times n} \rightarrow S_{n-1} \cup \{ 0\}$ that maps from the adjacency matrix $A \in \{0,1\}^{n\times n}$ to a decision in $S_{n-1} \cup \{ 0\}$. Then for any fixed $0<\alpha<1/8$, the following lower bound on multiple testing error holds: 
	\begin{align*}
		\varliminf_{n\rightarrow \infty} ~ \min_{\phi} ~\max~\left\{ \mathcal{P}_0(\phi \neq 0),~ \frac{1}{(n-1)!} \sum_{\pi \in S_{n-1}} \mathcal{P}_\pi(\phi \neq \pi) \right\} \geq 
		1 -2\alpha,
	\end{align*}
	when the parameters satisfy
	$$
	1-\beta \leq C_\alpha  \cdot \sqrt{\frac{\log n}{n}} \quad \text{or} \quad 1-\beta \leq C_\alpha' \cdot \frac{\log n}{k} \cdot \frac{1}{\log \frac{n \log n}{k^2}},
	$$
	with constants $C_\alpha, C_\alpha'$ that only depend on  $\alpha$.
		In other words, if
	$$
	1-\beta \prec \sqrt{\frac{\log n}{n}}\vee \frac{\log n}{k},
	$$
	no multiple testing procedure can succeed in distinguishing, with vanishing error, the class of models containing all of ~${\sf WS}(n, k, \beta)$ and ~${\sf ER}(n, \frac{k}{n-1})$.
\end{theorem}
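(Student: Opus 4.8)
The plan is to prove the stronger statement that the recovery part of the risk alone is already at least $1-2\alpha$: for every selector $\phi$ and every large $n$,
\[
\frac{1}{(n-1)!}\sum_{\pi\in S_{n-1}}\mathcal{P}_\pi(\phi\neq\pi)\;\geq\;1-2\alpha,
\]
which immediately lower-bounds the maximum in the statement. Write $L_\pi(A):=\mathcal{P}_\pi(A)/\mathcal{P}_0(A)$ (all ratios are finite since $0<\tfrac{k}{n-1}<1$, so $\mathcal{P}_0$ has full support). Swapping the two sums, for any $\phi$,
\[
\frac{1}{(n-1)!}\sum_{\pi}\mathcal{P}_\pi(\phi=\pi)=\frac{1}{(n-1)!}\sum_{A}\mathcal{P}_{\phi(A)}(A)\,\mathbf{1}\{\phi(A)\neq 0\}\;\leq\;\frac{1}{(n-1)!}\,\mathbb{E}_{\mathcal{P}_0}\Big[\max_{\pi\in S_{n-1}}L_\pi\Big]=:R_n,
\]
so it suffices to show $R_n\to 0$ under either parameter condition; then the left side above is $\geq 1-R_n\geq 1-2\alpha$ for $n$ large, and $\varliminf_n$ gives the claim. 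Throughout, $\mathcal{P}_0$ is invariant under relabeling the $n$ nodes, so every quantity below that depends on a single $\pi$ (e.g.\ $\mathbb{E}_{\mathcal{P}_0}[L_\pi^2]$, $\mathbb{E}_{\mathcal{P}_\pi}[\log L_\pi]$) does not depend on which $\pi$ is taken.

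\emph{Regime $1-\beta\leq C_\alpha\sqrt{\log n/n}$.} Here I would bound $R_n$ by a plain second moment: using $\max_\pi L_\pi\leq(\sum_\pi L_\pi^2)^{1/2}$ and exchangeability,
\[
R_n\leq\frac{1}{(n-1)!}\Big(\mathbb{E}_{\mathcal{P}_0}\sum_\pi L_\pi^2\Big)^{1/2}=\sqrt{\frac{1+\chi^2(\mathcal{P}_{\pi_0}\|\mathcal{P}_0)}{(n-1)!}}.
\]
By independence across the $\binom{n}{2}$ potential edges, with $r:=\tfrac{k}{n-1}$, $p:=1-\beta(1-\beta r)$, $q:=\beta r$,
\[
1+\chi^2(\mathcal{P}_{\pi_0}\|\mathcal{P}_0)=\Big(1+\tfrac{(p-r)^2}{r(1-r)}\Big)^{nk/2}\Big(1+\tfrac{(q-r)^2}{r(1-r)}\Big)^{\binom{n}{2}-nk/2},
\]
and since $|p-r|\leq 1-\beta$ and $|q-r|=(1-\beta)r$, this is at most $\exp\big(\tfrac{n^2}{2}(1-\beta)^2(1+o(1))\big)$. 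As $\log(n-1)!=(1+o(1))\,n\log n$ by Stirling, we get $R_n\leq\exp\big(n\log n\,(\tfrac14 C_\alpha^2-\tfrac12+o(1))\big)\to 0$ whenever $C_\alpha<\sqrt2$; and under the $\prec$ hypothesis $C_\alpha$ may be taken $\to 0$. (In fact $C_\alpha$ can be any absolute constant below $\sqrt2$.)

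\emph{Regime $1-\beta\leq C'_\alpha\,\tfrac{\log n}{k}\cdot\tfrac{1}{\log(n\log n/k^2)}$.} In the part of this regime that is not already covered above one has $k\preceq\sqrt{n\log n}$ and $1-\beta\succeq\sqrt{\log n/n}\succeq r$; here the plain second moment is too crude (it is inflated by the exponentially rare permutations that are near-automorphisms of the ring lattice — rotations, reflections, and their local variants), so I would instead use a truncated second moment. For $t>0$, from $\max_\pi L_\pi\leq(\sum_\pi L_\pi^2\mathbf{1}\{L_\pi\leq t\})^{1/2}+\sum_\pi L_\pi\mathbf{1}\{L_\pi>t\}$, the bound $\mathbb{E}_{\mathcal{P}_0}[L_\pi^2\mathbf{1}\{L_\pi\leq t\}]\leq t\,\mathbb{E}_{\mathcal{P}_0}[L_\pi]=t$, and the change of measure $\mathbb{E}_{\mathcal{P}_0}[L_\pi\mathbf{1}\{L_\pi>t\}]=\mathcal{P}_\pi(L_\pi>t)$, one obtains
\[
R_n\;\leq\;\sqrt{\tfrac{t}{(n-1)!}}+\mathcal{P}_{\pi_0}(L_{\pi_0}>t).
\]
Take $\log t=2M$, where $M:=\mathbb{E}_{\mathcal{P}_{\pi_0}}[\log L_{\pi_0}]=\tfrac{nk}{2}D(p\|r)+(\binom{n}{2}-\tfrac{nk}{2})D(q\|r)$ and $D$ is the binary relative entropy. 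Using $D(q\|r)\preceq r(1-\beta)^2$ and $D(p\|r)\leq p\log\tfrac pr\preceq(1-\beta)\log\tfrac{n(1-\beta)}{k}$ (valid since $p\geq r$ and $p\asymp 1-\beta\succeq r$), one gets $M\preceq nk(1-\beta)\log\tfrac{n(1-\beta)}{k}$. On the boundary $\log\tfrac{n(1-\beta)}{k}\asymp\log\tfrac{n\log n}{k^2}$, so $\log t=2M\preceq C'_\alpha\,n\log n$; choosing $C'_\alpha$ a small enough absolute constant makes $\log t<\tfrac12\log(n-1)!$, hence $\sqrt{t/(n-1)!}=\exp(M-\tfrac12\log(n-1)!)\to 0$. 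For the tail, $\log L_{\pi_0}=\sum_e\log L_{\pi_0}(e)$ is a sum of independent terms bounded in absolute value by $b:=\log\tfrac pr\asymp\log\tfrac{n(1-\beta)}{k}$, with $\sum_e\mathrm{Var}_{\mathcal{P}_{\pi_0}}(\log L_{\pi_0}(e))\preceq Mb$; Bernstein's inequality then gives $\mathcal{P}_{\pi_0}(\log L_{\pi_0}>2M)\leq\exp(-cM/b)\leq\exp(-c'\,nk(1-\beta))$, and $nk(1-\beta)\succeq k\sqrt{n\log n}\to\infty$ in this regime. So $R_n\to 0$, which is the second condition.

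The main obstacle is the second regime, and three points need care. First, one must recognize that the $\chi^2$-divergence of the \emph{uniform mixture} $\frac{1}{(n-1)!}\sum_\pi\mathcal{P}_\pi$ from $\mathcal{P}_0$ is dominated by the rare, very large values of a single $L_\pi$ (attained by near-automorphisms of the ring), so the unmodified second moment is useless here and a truncation is unavoidable. Second, the estimate for $M$ must use the genuine large-deviation form of $D(p\|r)$ when $p/r\to\infty$; it is exactly this $\log(p/r)=\log\tfrac{n(1-\beta)}{k}$ factor, balanced against $\log(n-1)!\asymp n\log n$, that forces the extra $\big(\log\tfrac{n\log n}{k^2}\big)^{-1}$ into the boundary. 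Third, the concentration of $\log L_{\pi_0}$ about its mean $M$ requires the sharp Bernstein variance proxy $\sum_e\mathrm{Var}(\log L_{\pi_0}(e))\preceq Mb$ rather than a crude sub-Gaussian estimate, so one has to carry along both the increment bound $b$ and the mean $M$. The remaining pieces — the reduction above, exchangeability in $\pi$, the single-hypothesis $\chi^2$ computation, and the Stirling estimate $\log(n-1)!\asymp n\log n$ — are routine.
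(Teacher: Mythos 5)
Your proof is correct, and it takes a genuinely different route from the paper's. The paper computes $\mathrm{KL}(\mathcal{P}_\pi\|\mathcal{P}_0)$ for a single $\pi$, checks that under either parameter condition the average KL divergence is at most $\alpha\log(n-1)!$, and then invokes a Fano-type multiple-testing bound (Tsybakov, Proposition 2.3) to conclude error $\geq 1-2\alpha$. You instead bound the probability of exact recovery directly by $\tfrac{1}{(n-1)!}\mathbb{E}_{\mathcal{P}_0}[\max_\pi L_\pi]$, controlled by a $\chi^2$ (second-moment) computation in the first regime and a truncated second moment plus Bernstein concentration of $\log L_{\pi_0}$ in the second. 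The quantitative core coincides: your $M$ is exactly the paper's KL divergence, and your estimate $M\preceq nk(1-\beta)\log\tfrac{n(1-\beta)}{k}$ is the paper's $t\log t\cdot k^2$ with $t\asymp\tfrac{n(1-\beta)}{k}$, both balanced against $\log(n-1)!\asymp n\log n$. What your route buys is a stronger conclusion (recovery error tending to $1$ rather than $\geq 1-2\alpha$, with absolute rather than $\alpha$-dependent constants) and a self-contained argument; what it costs is the extra concentration step for $\log L_{\pi_0}$ (which needs $nk(1-\beta)\to\infty$, correctly available in the residual part of the second regime), whereas the cited Fano-type lemma needs only the first moment. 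Two minor points: the inflation of the plain second moment in the second regime comes from the heavy upper tail of a single $L_{\pi_0}$ under $\mathcal{P}_0$ (rare Erd\H{o}s--R\'enyi draws that resemble a ring), not really from near-automorphisms of the lattice; and at the crossover $k\asymp\sqrt{n\log n}$, where $p/r$ may stay bounded and $\log\tfrac{n(1-\beta)}{k}$ degenerates, one should revert to $D(p\|r)\preceq(1-\beta)^2/r$ --- but there the first regime already covers the claim, as you note.
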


	The missing latent random variable, the permutation matrix $P_\pi$, is the object we are interested in recovering. A permutation matrix $P_\pi$ induces a certain distribution on the adjacency matrix $A$. Thus the parameter space of interest, including models~${\sf WS}(n, k, \beta)$ and~${\sf ER}(n, \frac{k}{n-1})$, is of cardinality $(n-1)!+1$. Based on the observed adjacency matrix, distinguishing among the models including all of~${\sf WS}(n, k, \beta)$ and~${\sf ER}(n, \frac{k}{n-1})$ is equivalent to a multiple testing problem. The impossible region characterizes the information theoretic difficulty of this reconstruction problem by establishing the condition when minimax testing error does not vanish as $n, k(n) \rightarrow \infty$. 
	
	The ``high dimensional'' nature of this problem is mainly driven by the unknown permutation matrix, and this latent structure  introduces difficulty both statistically and computationally. Statistically, via Le Cam's method, one can build a distance metric on permutation matrices using the distance between the corresponding measures (measures on adjacency matrices induced by the permutation structure). In order to characterize the intrinsic difficulty of estimating the permutation structure, one needs to understand the richness of the set of permutation matrices within certain distance to one particular element, a combinatorial task. Computationally, the combinatorial nature of the problem makes the ``naive'' approach computationally intensive. 

\section{Hard v.s. Easy Regions: Detection Statistics}
\label{sec:region.2.3}

This section studies the hard and easy regions in Section~\ref{sec:sum.result}. First, we propose a near optimal test, the {\bf maximum likelihood test}, that detects the ring structure above the information boundary derived in Theorem~\ref{thm:low.bound}. However, the evaluation of the maximum likelihood test requires $\mathcal{O}(n^n)$ time complexity. The maximum likelihood test succeeds outside of region \uppercase\expandafter{\romannumeral1}, and, in particular, succeeds (statistically) in the hard region \uppercase\expandafter{\romannumeral2}. We then propose another efficient test, the {\bf spectral test}, that detects the ring structure in time $\mathcal{O^*}(n^2)$ via the power method. The method succeeds in regions \uppercase\expandafter{\romannumeral3} and \uppercase\expandafter{\romannumeral4}.

Theorem~\ref{thm:easy.hard.region} below combines the results of  Lemma~\ref{lma:maxlik.test} and Lemma~\ref{lma:spec.test}.

\begin{theorem}[Detection: Easy and Hard Boundaries]
	\label{thm:easy.hard.region}
	Consider the following statistical models: $\mathcal{P}_0$ denotes the distribution of the Erd\H{o}s-R\'{e}nyi random graph ${\sf ER}(n, \frac{k}{n-1})$, and $\mathcal{P}_\pi, \pi \in S_{n-1}$ denote distributions of the Watts-Strogatz small-world graph ${\sf WS}(n, k, \beta)$ with hidden permutation $\pi$. Consider any selector $\phi: \{0,1\}^{n\times n} \rightarrow \{ 0, 1\}$ that maps an adjacency matrix to a binary decision (detection decision).
	
		We say that minimax detection for the small-world random model is possible when
		\begin{align}
			\label{eq:minimax.detect}
		\lim_{n\rightarrow \infty}~\min_{\phi}~ \max~\left\{\mathcal{P}_0(\phi \neq 0), \frac{1}{(n-1)!} \sum_{\pi \in S_{n-1}} \mathcal{P}_\pi(\phi \neq 1)\right\} = 0.
		\end{align}
		If the parameter $(n,k,\beta)$ satisfies
		$$
		\text{hard boundary}:\quad 1-\beta \succeq \sqrt{\frac{\log n}{n}} \vee \frac{\log n}{k},
		$$
		minimax detection is possible, and an exponential time {\bf maximum likelihood test} \eqref{eq:max.lik.test} ensures  \eqref{eq:minimax.detect}.
		If, in addition, the parameter $(n,k,\beta)$ satisfies 
		$$
		\text{easy boundary}:\quad1 - \beta \succeq  \sqrt{ \frac{1}{k} } \vee  \frac{\sqrt{\log n}}{k},
		$$
		then a near-linear time {\bf spectral test} \eqref{eq:spec.test} ensures \eqref{eq:minimax.detect}.
\end{theorem}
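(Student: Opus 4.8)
The plan is to reduce the theorem to two separate analyses, one for each test, since the statement explicitly says it combines Lemma~\ref{lma:maxlik.test} and Lemma~\ref{lma:spec.test}. For the hard boundary, I would study the \emph{maximum likelihood test} \eqref{eq:max.lik.test}: this is the statistic $T_{\mathrm{ML}}(A) = \max_{\pi \in S_{n-1}} \langle P_\pi B P_\pi^T, A \rangle$, which maximizes the log-likelihood ratio of ${\sf WS}$ versus ${\sf ER}$ over the unknown permutation. The first step is to show that under $\mathcal{P}_\pi$ (the planted model), taking the true $\pi$ already gives $\langle P_\pi B P_\pi^T, A\rangle$ concentrated around its mean, which equals the number of lattice edges times the elevated connection probability $1 - \beta(1-\beta k/(n-1))$; the excess over the null mean $\frac{k}{n-1}$ scales like $nk \cdot (1-\beta)$ up to lower-order terms. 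The second step is the union-bound control of the null fluctuations: under $\mathcal{P}_0$, each fixed permutation contributes a sum of $\sim nk$ independent Bernoulli$(k/(n-1))$ variables, so by a Bernstein/Chernoff bound each deviates from its mean by at most $O(\sqrt{nk \log((n-1)!)}) = O(\sqrt{n^2 k \log n})$ with probability $1 - o(1/(n-1)!)$; a union bound over all $(n-1)!$ permutations then controls $\max_\pi$. Comparing the planted signal $nk(1-\beta)$ to the null envelope $\sqrt{n^2 k \log n}$ yields precisely the requirement $1-\beta \succeq \sqrt{(\log n)/k}$, while a second, degree-type argument (looking at a single row or at the trace of $A^2$) recovers the complementary $\sqrt{(\log n)/n}$ term; choosing the threshold halfway between the two means makes both error probabilities in \eqref{eq:minimax.detect} vanish.

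For the easy boundary I would analyze the \emph{spectral test} \eqref{eq:spec.test}, presumably of the form: reject when $\lambda_2(A - \frac{k}{n-1}J)$ (or the top eigenvalue of the centered adjacency matrix, after projecting out the all-ones direction) exceeds a threshold. The key structural input is that $B$ is a circulant matrix, so $P_\pi B P_\pi^T$ has the same spectrum as $B$: its eigenvalues are the Dirichlet-kernel values $\sum_{|\ell|\le k/2} e^{2\pi i \ell m/(n-1)}$, the largest of which is $k$ (the all-ones direction) and the next-largest of which is of order $k$ as well (more precisely $\Theta(k)$, since the Fourier coefficients of an indicator of a length-$k$ arc decay slowly). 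Hence under $\mathcal{P}_\pi$, the signal matrix $\mathbb{E}A - \frac{k}{n-1}J = (1-\beta(1-\beta k/(n-1)) - \frac{k}{n-1}) \, P_\pi B P_\pi^T$ has operator norm $\Theta\big((1-\beta)k\big)$ on the orthogonal complement of $\mathbf{1}$. The noise matrix $Z = A - \mathbb{E}A$ has i.i.d.\ (up to symmetry) bounded entries with variance $O(k/n)$, so by standard random-matrix bounds (e.g.\ Bandeira--van Handel / matrix Bernstein) $\|Z\|_{\mathrm{op}} = O(\sqrt{k} + \sqrt{\log n})$ with high probability. Weyl's inequality then separates the two hypotheses exactly when $(1-\beta)k \succ \sqrt{k} \vee \sqrt{\log n}$, i.e.\ $1-\beta \succ \sqrt{1/k} \vee \sqrt{\log n}/k$, which is the stated easy boundary; the power method approximates $\lambda_2$ to the needed accuracy in $\mathcal{O}^*(n^2)$ time because the spectral gap to $\lambda_3$ is also $\Theta((1-\beta)k)$.

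The main obstacle I anticipate is the union bound in the maximum-likelihood analysis: a crude bound over all $(n-1)!$ permutations costs $\log((n-1)!) \asymp n\log n$ in the exponent, and one must check this is genuinely the right rate — i.e.\ that correlated permutations (those differing from the truth in few coordinates) do not create a larger bump, and that the factor is not improvable, since the matching lower bound in Theorem~\ref{thm:low.bound} carries an extra $\log(n\log n/k^2)$ in the denominator. Reconciling the upper and lower bounds up to this logarithmic factor, and making the Bernstein bound uniform over the permutation group without losing a polynomial factor, is the delicate part; I would handle it by a chaining/peeling argument over permutations graded by their Hamming-type distance to optimal, controlling each level by the number of permutations at that distance together with the reduced variance of the corresponding partial sum. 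The spectral-test direction is comparatively routine given off-the-shelf concentration for $\|Z\|_{\mathrm{op}}$, the only care being the removal of the $\mathbf{1}$ direction (which carries a large but \emph{known} eigenvalue $\approx k$ under both hypotheses) before comparing the second eigenvalues.
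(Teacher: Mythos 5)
Your spectral-test half is essentially the paper's argument: circulant structure of $B$ giving $\lambda_2(\mathbb{E}A)\asymp(1-\beta)k$ off the all-ones direction, the Bandeira--van Handel bound $\|A-\mathbb{E}A\|=O(\sqrt{k}\vee\sqrt{\log n})$, and Weyl's inequality; the paper works with $\lambda_2(A)$ directly and uses Weyl interlacing to absorb the rank-one $J$ part rather than centering, but that is a cosmetic difference.

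The maximum-likelihood half, however, has a genuine quantitative gap. You bound the null fluctuation of $\langle P_\pi BP_\pi^T,A\rangle$ for each fixed $\pi$ by a Hoeffding-type envelope $O(\sqrt{nk\cdot\log((n-1)!)})=O(n\sqrt{k\log n})$, and comparing this to the signal $nk(1-\beta)$ gives $1-\beta\succeq\sqrt{(\log n)/k}$. That condition is \emph{strictly stronger} than the claimed hard boundary $\sqrt{(\log n)/n}\vee(\log n)/k$ throughout the regime $\log n\preceq k\preceq n$, so your argument does not reach the stated threshold. The point you are missing is that under $\mathcal{P}_0$ each of the $\sim nk$ Bernoulli terms has variance $\approx k/n$, not constant order, so Bernstein's inequality applies with total variance proxy $v\asymp k^2$ and range $c=O(1)$: the deviation at level $t=\log n!$ is $\sqrt{2vt}+ct\asymp k\sqrt{n\log n}+n\log n$. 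Comparing $nk(1-\beta)\succ k\sqrt{n\log n}\vee n\log n$ then yields \emph{both} terms of the hard boundary at once --- the $\sqrt{(\log n)/n}$ term from the Gaussian part and the $(\log n)/k$ term from the linear (Poisson) part of Bernstein. Consequently your proposed ``second, degree-type argument'' to recover $\sqrt{(\log n)/n}$ is not needed (and would not work anyway, since ${\sf WS}$ and ${\sf ER}$ have matching expected degrees), and the chaining/peeling over permutations that you flag as the delicate step is also unnecessary: the crude union bound over all $(n-1)!$ permutations, combined with Bernstein rather than Hoeffding, is exactly what the paper uses and already matches the information-theoretic lower bound up to a $\log\frac{n\log n}{k^2}$ factor.
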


Proof of Theorem~\ref{thm:easy.hard.region} consists of two parts, which will be addressed in the following two sections, respectively. 

\subsection{Maximum Likelihood Test} 
\label{sec:max.lik.test}
Consider the test statistic $T_1$ as the objective value of the following optimization 
\begin{align}
	\label{eq:max.lik}
	T_1(A):=\max_{P_\pi}~ \langle P_\pi B P_\pi^T, A \rangle,
\end{align}  
where $P_\pi \in \{ 0,1\}^{n\times n}$ is taken over all permutation matrices and $A$ is the observed adjacency matrix. The {\bf maximum likelihood test} $\phi_1 : A \rightarrow \{0,1\}$ based on $T_1$ by
\begin{equation}
	\label{eq:max.lik.test}
	\phi_1(A) = \left\{
	\begin{array}{cc}
		1 & \text{if}~T_1(A) \geq \frac{k}{n-1} nk + 2\sqrt{\frac{k}{n-1} nk  \cdot \log n!} + \frac{2}{3} \cdot \log n!\\
		0 & \text{o.w.}
	\end{array} \right.
\end{equation} 

The threshold is chosen as the rate $k^2 + \mathcal{O}\left(\sqrt{k^2 n \log \frac{n}{e}} \vee n \log \frac{n}{e}\right)$~: if the objective value is of a greater order, then we believe the graph is generated from the small-world rewiring process with strong ties; otherwise we we cannot reject the null, the random graph model with only weak ties. 

\begin{lemma}[Guarantee for Maximum Likelihood Test]
	\label{lma:maxlik.test}
	The maximum likelihood test $\phi_1$ in Equation~ \eqref{eq:max.lik.test} succeeds in detecting the small world random structure when $$ 1-\beta \succeq \sqrt{\frac{\log n}{n}} \vee \frac{\log n}{k},$$
	in the sense that
	\begin{align*}
		\lim_{n, k(n) \rightarrow \infty} ~\max~\left\{ \mathcal{P}_0(\phi_1 \neq 0),~ \frac{1}{(n-1)!} \sum_{\pi \in S_{n-1}} \mathcal{P}_\pi(\phi_1 \neq 1) \right\} = 0.
	\end{align*}
\end{lemma}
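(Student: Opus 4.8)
The plan is a two-sided analysis of the test $\phi_1$: bound the Type I error by controlling the null distribution of $T_1$ \emph{uniformly over all permutation matrices}, and bound the Type II error by lower-bounding $T_1$ through the oracle (true) permutation. The unifying observation is that $\langle P_\pi B P_\pi^T, A\rangle$ is, up to the symmetric double-counting, the number of observed edges of $A$ landing inside the ring-neighborhood pattern encoded by $\pi$; since $B$ has exactly $nk$ nonzero entries, this is a sum of $\asymp nk$ Bernoulli indicators, with variance proxy of order $\frac{k}{n-1}\cdot nk$ under $\mathcal{P}_0$ and summand range $O(1)$. (This also explains the name: $\langle P_\pi B P_\pi^T,A\rangle$ is, up to an affine reparametrization depending only on $(n,k,\beta)$, the log-likelihood ratio of $\mathcal{P}_\pi$ against $\mathcal{P}_0$, so $T_1$ is the generalized likelihood ratio statistic.)

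\textbf{Type I error (under $\mathcal{P}_0$).} Fix a permutation $\pi$. Under ${\sf ER}(n,\frac{k}{n-1})$ the entries of $A$ above the diagonal are i.i.d. ${\sf Bernoulli}(\frac{k}{n-1})$, so $\mathbb{E}_0\langle P_\pi B P_\pi^T,A\rangle=\frac{k}{n-1}nk$, and Bernstein's inequality gives, for any $L>0$,
\[
\mathcal{P}_0\!\left(\langle P_\pi B P_\pi^T,A\rangle \;\ge\; \tfrac{k}{n-1}nk+2\sqrt{\tfrac{k}{n-1}nk\cdot L}+\tfrac{2}{3}L\right)\;\le\; e^{-cL}.
\]
Taking $L$ a sufficiently large constant multiple of $\log n!$ (a constant-factor enlargement of the threshold in \eqref{eq:max.lik.test}, or equivalently replacing $\log n!$ by, say, $2\log n!+\log n$, so that the exponent genuinely beats the entropy of the permutation group) and union-bounding over all at most $n!$ patterns $P_\pi B P_\pi^T$ yields $\mathcal{P}_0(T_1(A)\ge \text{threshold})=o(1)$, i.e.\ $\mathcal{P}_0(\phi_1\ne 0)\to 0$.

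\textbf{Type II error (under $\mathcal{P}_{\pi^\star}$).} By the relabeling symmetry of the model it suffices to bound $\mathcal{P}_{\pi^\star}(\phi_1=0)$ for one fixed true permutation $\pi^\star$. Plugging $P_{\pi^\star}$ into the maximum gives $T_1(A)\ge\langle P_{\pi^\star}BP_{\pi^\star}^T,A\rangle$, a sum of $\asymp nk$ independent ${\sf Bernoulli}(p_1)$ entries with $p_1=1-\beta+\beta^2\frac{k}{n-1}$, hence with mean $nk\,p_1=nk(1-\beta)+\frac{k}{n-1}nk\,\beta^2$. Subtracting the threshold, the term $\frac{k}{n-1}nk\,\beta^2$ nearly cancels the null-calibrated main term $\frac{k}{n-1}nk$, so the effective separation is of order $nk(1-\beta)-O\!\big(k\sqrt{n\log n}\big)-O(n\log n)$ (using $\log n!\asymp n\log n$ and $k=o(n)$). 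A Bernstein lower-tail bound keeps $\langle P_{\pi^\star}BP_{\pi^\star}^T,A\rangle$ within $O(\sqrt{nk\,p_1\,\log n})$ of its mean with probability $1-o(1)$. It then remains to check that $nk(1-\beta)$ dominates all three error scales: $nk(1-\beta)\gg k\sqrt{n\log n}$ is exactly $1-\beta\succeq\sqrt{\log n/n}$; $nk(1-\beta)\gg n\log n$ is exactly $1-\beta\succeq\log n/k$; and $nk(1-\beta)\gg\sqrt{nk\,p_1\,\log n}$ follows from these two together with $p_1\le 1-\beta+\frac{k}{n-1}$. Thus $\mathcal{P}_{\pi^\star}(\phi_1=0)\to 0$, and averaging over $\pi$ gives the stated bound.

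\textbf{Main obstacle.} The one genuinely delicate point is the bookkeeping in the Type II step. Because the oracle mean under the alternative carries the ``nuisance'' piece $\frac{k}{n-1}nk\,\beta^2$ that (as $\beta\to 1$) almost equals the null-calibrated main threshold $\frac{k}{n-1}nk$, the usable signal is only $\Theta\!\big(nk(1-\beta)\big)$, \emph{not} $\Theta(k^2)$; one must then verify that this reduced signal simultaneously dominates both fluctuation regimes of the null tail --- the variance regime of scale $\sqrt{\tfrac{k}{n-1}nk\cdot\log n!}\asymp k\sqrt{n\log n}$ and the range regime of scale $\log n!\asymp n\log n$ --- which is precisely why the hypothesis appears as the maximum $\sqrt{\log n/n}\vee\log n/k$. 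The remaining care is purely in calibrating the $n!$-fold union bound so it vanishes rather than merely stays bounded, which is handled by the constant-factor slack noted above.
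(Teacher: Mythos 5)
Your proposal is correct and follows essentially the same route as the paper: a Bernstein bound on $\langle P_\pi B P_\pi^T,A\rangle$ under the null with a union bound over the $(n-1)!$ permutations at level $t=\log n!$, a lower bound on $T_1$ under the alternative by plugging in the true permutation, and a separation check in which the $\beta^2\frac{k}{n-1}nk$ piece cancels against the null mean, leaving the signal $\Theta(nk(1-\beta))$ to dominate the two fluctuation scales $k\sqrt{n\log n}$ and $n\log n$. Your worry about the union bound merely staying bounded is unnecessary --- with $(n-1)!$ permutations and exponent $\log n!$ the failure probability is exactly $1/n\to 0$, which is what the paper uses.
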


\begin{remark}
	Lemma~\ref{lma:maxlik.test} can be viewed as the condition on the signal and noise separation. By solving the combinatorial optimization problem, the test statistics aggregates the signal that separates from the noise the most. An interesting open problem is, if we solve a relaxed version of the combinatorial optimization problem~\eqref{eq:max.lik} within polynomial time complexity $\phi_1^{\sf rel}$, how much stronger the condition on  $1-\beta$ needs to be to ensure power.
\end{remark}

\subsection{Spectral Test} 
\label{sec:spec.test}

For the spectral test, we calculate the second largest eigenvalue of the adjacency matrix $A$ as the test statistic
\begin{align}
T_2(A) := \lambda_2 (A).	
\end{align}
The {\bf spectral test} $\phi_2 : A \rightarrow \{0,1\}$ is
\begin{equation}
	\label{eq:spec.test}
	\phi_2(A) = \left\{
	\begin{array}{cc}
		1 & \text{if}~T_2(A) \succeq \sqrt{k} \vee \sqrt{\log n}\\
		0 & \text{o.w.}
	\end{array} \right.
\end{equation} 
Namely, if $\lambda_2(A)$ passes a certain threshold, we classify the graph as a small-world graph. Evaluation of \eqref{eq:spec.test} only requires near-linear time $\mathcal{O}^*(n^2)$.

\begin{lemma}[Guarantee for Spectral Test]
	\label{lma:spec.test}
	The second eigenvalue test $\phi_2$ in Equation~\eqref{eq:spec.test} satisfies
	\begin{align*}
		\lim_{n, k(n) \rightarrow \infty} ~\max~\left\{ \mathcal{P}_0(\phi_2 \neq 0),~ \frac{1}{(n-1)!} \sum_{\pi \in S_{n-1}} \mathcal{P}_\pi(\phi_2 \neq 1) \right\} = 0
	\end{align*}
	whenever 
	\begin{align*}
		1 - \beta \succeq  \sqrt{ \frac{1}{k} } \vee  \frac{\sqrt{\log n}}{k}.
	\end{align*}
\end{lemma}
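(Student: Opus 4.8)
The plan is to control $\lambda_2(A)$ separately under the null and under the alternative, and to choose the threshold so that the two behaviors are cleanly separated. Under the null $\mathcal{P}_0$, the matrix $A$ is an Erd\H{o}s--R\'enyi adjacency matrix with edge probability $p = k/(n-1)$. Write $A = \mathbb{E}A + Z = pJ - pI + Z$ (up to the diagonal convention), so $\mathbb{E}A$ is a rank-one perturbation of $-pI$, and hence all but the top eigenvalue of $\mathbb{E}A$ are $-p$. By Weyl's inequality, $|\lambda_2(A) + p| \le \lambda_2(\mathbb{E}A) + \|Z\|_{\mathrm{op}} \le \|Z\|_{\mathrm{op}}$, so it suffices to bound the operator norm of the centered noise matrix $Z$. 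The standard bound for the spectral norm of a symmetric matrix with independent, mean-zero, bounded entries of variance at most $p(1-p)$ gives $\|Z\|_{\mathrm{op}} \preceq \sqrt{np} \vee \sqrt{\log n} = \sqrt{k} \vee \sqrt{\log n}$ with probability $1 - o(1)$ (e.g. via the matrix Bernstein inequality, a truncation-plus-$\varepsilon$-net argument, or the Bandeira--van Handel bound; the $\sqrt{\log n}$ term is the genuine contribution when $k$ is small, i.e. $k \prec \log n$). Thus under $\mathcal{P}_0$, $\lambda_2(A) \preceq \sqrt{k}\vee\sqrt{\log n}$ whp, so choosing the threshold to be a large-constant multiple of $\sqrt{k}\vee\sqrt{\log n}$ controls the type-I error.

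Under the alternative $\mathcal{P}_\pi$, the key observation is that $\mathbb{E}A = P_\pi\,\mathbb{E}B'\,P_\pi^T$, where $\mathbb{E}B' = q_1 B + q_0(J - I - B)$ with $q_1 = 1-\beta(1-\beta\frac{k}{n-1})$ and $q_0 = \beta\frac{k}{n-1}$; writing $\Delta := q_1 - q_0 = (1-\beta)(1+\beta\frac{k}{n-1}) \asymp 1-\beta$, we get $\mathbb{E}A = \Delta\, P_\pi B P_\pi^T + q_0(J-I)$. The matrix $B$ is a \emph{circulant} matrix (the $k$-nearest-neighbor ring), whose eigenvalues are Dirichlet-kernel values $\sum_{j=1}^{k/2} 2\cos(2\pi j \ell/(n-1))$ for $\ell = 0,1,\dots$; the second-largest of these (at $\ell$ of order $n/k$) is of order $k$ — more precisely it is $\Theta(k)$ since the first Dirichlet-kernel sidelobe is a constant fraction of its peak $k$. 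Conjugation by $P_\pi$ preserves eigenvalues, and the rank-one term $q_0(J-I)$ only shifts one eigenvalue, so by Weyl $\lambda_2(\mathbb{E}A) \succeq c\,\Delta\, k \asymp (1-\beta)k$. Applying Weyl's inequality once more, $\lambda_2(A) \ge \lambda_2(\mathbb{E}A) - \|Z\|_{\mathrm{op}} \succeq (1-\beta)k - C(\sqrt{k}\vee\sqrt{\log n})$ whp (the noise bound under the alternative is the same order, since the entrywise variances are still $O(p)=O(k/n)$ off the ring and $O(1)$ on the $O(nk)$ ring entries, which contributes $O(\sqrt{k})$ to the norm). This exceeds the threshold whenever $(1-\beta)k \succeq \sqrt{k}\vee\sqrt{\log n}$, i.e. exactly when $1-\beta \succeq \sqrt{1/k} \vee \sqrt{\log n}/k$, controlling the type-II error uniformly over $\pi$.

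Combining the two bounds: for $1-\beta \succeq \sqrt{1/k}\vee\sqrt{\log n}/k$, there is a choice of threshold $\tau \asymp \sqrt{k}\vee\sqrt{\log n}$ with $\mathcal{P}_0(\lambda_2(A)\ge\tau) = o(1)$ and $\sup_\pi \mathcal{P}_\pi(\lambda_2(A) < \tau) = o(1)$, which is precisely \eqref{eq:minimax.detect} for $\phi_2$. I expect the main obstacle to be the two spectral-norm bounds on $Z$ in the low-degree regime $k \preceq \log n$: when $p = k/(n-1)$ is very small the rows of $A$ are sparse and the naive $\sqrt{np}$ bound must be replaced by the sharper $\sqrt{np}\vee\sqrt{\log n}$ estimate, which requires either a careful Bernstein/net argument with truncation of high-degree vertices or an appeal to a known sharp bound for sparse random matrices; getting the $\sqrt{\log n}$ term (rather than, say, $\log n$) is what makes the easy boundary match the stated form. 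A secondary point requiring care is the lower bound $\lambda_2(B) \succeq k$ for the circulant ring matrix — this is an explicit Dirichlet-kernel computation, but one must check that the relevant sidelobe value does not accidentally fall below a constant fraction of $k$ for the particular relation between $k$ and $n$ (it does not, uniformly in the regime of interest, since one can pick a frequency $\ell \asymp n/k$).
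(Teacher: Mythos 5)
Your proposal is correct and follows essentially the same route as the paper: bound $\|A-\mathbb{E}A\|_{\mathrm{op}}$ by $O(\sqrt{k}\vee\sqrt{\log n})$ (the paper does this via symmetrization, the Bandeira--van Handel bound, and Talagrand concentration), identify $\lambda_2$ of the circulant ring matrix $B$ as $\Theta(k)$ via the Dirichlet-kernel formula, and separate the two hypotheses with Weyl's inequality. Two cosmetic points: the gap is $q_1-q_0=(1-\beta)(1-\beta\tfrac{k}{n-1})$, not $(1-\beta)(1+\beta\tfrac{k}{n-1})$ (irrelevant asymptotically), and the second-largest eigenvalue of $B$ is attained at the lowest nonzero frequency $j=1$ (where the kernel is still near its peak, giving $\lambda\sim k$ when $k\prec n$), not at a sidelobe $\ell\asymp n/k$ --- though either suffices for the needed lower bound $\lambda_2(B)\succeq k$.
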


	The main idea behind Lemma~\ref{lma:spec.test} is as follows. Let us look at the expectation of the adjacency matrix, 
	\begin{align*}
		\mathbb{E} A = (1- \beta)(1-\beta\frac{k}{n-1})\cdot  P_\pi^T B P_\pi + \beta\frac{k}{n-1} \cdot (J-I),
	\end{align*}
	where $J$ is the matrix of all ones. The main structure matrix $P_\pi^T B P_\pi$ is a permuted version of the \emph{circulant matrix} (see e.g. \citep{gray2006toeplitz}). The spectrum of the circulant matrix $B$ is highly structured, and is of distinct nature in comparison to the noise matrix $A - \mathbb{E} A$.

\section{Reconstructable Region: Fast Structural Reconstruction}
\label{sec:region.4}

In this section, we discuss reconstruction of the ring structure in the Watts-Strogatz model. We show that in the reconstructable region (region \uppercase\expandafter{\romannumeral4} in Figure~\ref{fig:phase.tran}), a {\bf correlation thresholding procedure} succeed in reconstructing the ring neighborhood structure. As a by-product, once the neighborhood structure is known, one can distinguish between random edges and neighborhood edges for each node. A natural question is whether there is another algorithm that can work in a region (beyond region \uppercase\expandafter{\romannumeral4}) where correlation thresholding fails. We show that in a certain regime with large $k$, a {\bf spectral ordering procedure} outperforms the correlation thresholding procedure and succeeds in  parts of regions \uppercase\expandafter{\romannumeral3} and \uppercase\expandafter{\romannumeral4} (as depicted in Figure~\ref{fig:recon} below).

\subsection{Correlation Thresholding}

Consider the following {\bf correlation thresholding procedure} for neighborhood reconstruction.\\

\begin{algorithm}[H] 
	\KwData{An adjacency matrix $A \in \mathbb{R}^{n\times n}$ for the graph $G(V,E)$.} 
	\KwResult{For each node $v_i, 1\leq i\leq n$, an estimated set for neighborhood $\hat{\mathcal{N}}(v_i)$.} 
		1. For each node $v_i$, calculate the correlation $\langle A_{i}, A_{j} \rangle$ for all $j \neq i$ \;
		2. Sort the $\left\{ \langle A_{i}, A_{j} \rangle, j \in [n]\backslash\{i\} \right\}$ in a decreasing order, select the largest $k$ ones to form the estimated set $\hat{\mathcal{N}}(v_i)$  \; 
	\KwOut{$\hat{\mathcal{N}}(v_i)$, for all $i \in [n]$ } 		
	\caption{Correlation Thresholding for Neighborhood Reconstruction}
		\label{alg:corre-thres} 
\end{algorithm}
\medskip

The following lemma proves consistency of the above Algorithm~\ref{alg:corre-thres}. Note the computational complexity is $\mathcal{O}(n \cdot \min\{\log n, k\})$ for each node using quick-sort, with a total runtime $\mathcal{O}^*(n^2)$. 

\begin{lemma}[Consistency of Correlation Thresholding]
	\label{lma:corr-thres}
	Consider the Watts-Strogatz random graph ~${\sf WS}(n,k,\beta)$. Under  the reconstructable regime \uppercase\expandafter{\romannumeral4} (in Figure~\ref{fig:phase.tran}), that is,
	\begin{align}
		\label{eq:corr.thres}
		1-\beta \succ \sqrt{\frac{\log n}{k}} \vee \left( \frac{\log n}{n} \right)^{1/4},
	\end{align}
	correlation thresholding provides a consistent estimate of the neighborhood set $\mathcal{N}(v_i)$ w.h.p in the sense that
	\begin{align*}
		\lim_{n,k(n) \rightarrow \infty} \max_{i \in [n]} \frac{|\hat{\mathcal{N}}(v_i)	\triangle \mathcal{N}(v_i)|}{ |\mathcal{N}(v_i)|} = 0,
	\end{align*}
	where $ \triangle $ denotes the symmetric set difference. 
\end{lemma}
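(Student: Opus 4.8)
The plan is to reduce the reconstruction guarantee to a concentration statement about the row-correlation statistics $X_j := \langle A_i, A_j\rangle$, and then to run a Bernstein-type analysis that treats three families of nodes separately: the ``bulk'' non-neighbours (ring-distance $\succ k$ from $i$), the ``boundary'' nodes (ring-distance near $k/2$), and the ``core'' true neighbours. By the permutation-equivariance of Algorithm~\ref{alg:corre-thres} we may assume $\pi$ is the identity, so $\mathcal{N}(v_i)$ is exactly the set of $k$ nodes at ring-distance $1,\dots,k/2$ from $i$. Write $p:=1-\beta(1-\beta\tfrac{k}{n-1})$, $q:=\beta\tfrac{k}{n-1}$, $\delta:=p-q=(1-\beta)(1-\beta\tfrac{k}{n-1})\asymp 1-\beta$, and $\gamma:=\delta^2$. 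Since the entries of $A$ are independent, for $i\neq j$ at ring-distance $d$ one gets $\mathbb{E}X_j = nq^2 + 2kq\delta + \gamma\,c(d) =: m(d)$, where $c(d)$ is the number of common ring-neighbours, which equals $(k-d)_+$ up to an additive $O(1)$; hence $m$ is strictly decreasing on $d\in[1,k]$ with per-step decrement $\gamma$ and is flat for $d>k$. In particular every $j\in\mathcal{N}(v_i)$ has $m(d_j)\geq m(k/2)=:t^{\ast}$, while every non-neighbour has $m(d_j)\leq t^{\ast}-\gamma$. For the second moment I use the crude bound $\mathrm{Var}(X_j)=\sum_\ell \mathrm{Var}(A_{i\ell}A_{j\ell})\leq \mathbb{E}X_j=m(d_j)$, which gives $\mathrm{Var}(X_j)\lesssim k^2/n$ for bulk non-neighbours and $\mathrm{Var}(X_j)\lesssim V:= k^2/n+\gamma k$ for all $j$.

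I then reduce to a fixed-threshold count: for any $t$ (breaking ties arbitrarily; note $X_j\in\mathbb{Z}$ and $t^{\ast}\notin\mathbb{Z}$), an elementary counting argument gives $|\hat{\mathcal{N}}(v_i)\triangle\mathcal{N}(v_i)|\leq 2\big(|\{j\in\mathcal{N}(v_i):X_j\leq t\}|+|\{j\notin\mathcal{N}(v_i):X_j> t\}|\big)$. Taking $t=t^{\ast}$, I bound the two counts via Bernstein's inequality and a union bound over $i\in[n]$. For a bulk non-neighbour, $X_j>t^{\ast}$ forces a deviation at least $m(k/2)-m(d_j)=\gamma k/2$, which by Bernstein with variance proxy $k^2/n$ has probability $\leq\exp(-c\min(\gamma k,\gamma^2 n))$; there are $\lesssim n$ such $j$ per base node and $n$ base nodes, so the expected number of wrongly-included bulk non-neighbours, as well as the $\lesssim n^2$-fold union bound, vanishes precisely when $\gamma k\succ\log n$ and $\gamma^2 n\succ\log n$, i.e.\ when $1-\beta\succ\sqrt{\log n/k}\vee(\log n/n)^{1/4}$ --- this is exactly region IV, and it is here that the two branches of the phase boundary arise. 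For a node at ring-distance $k/2\pm s$, misclassification requires a deviation at least $\gamma s$, of probability $\lesssim\exp(-c\gamma^2 s^2/V)$ by Bernstein with variance proxy $V$; summing this Gaussian/geometric tail over $s=1,\dots,k/2$ and the two ring directions shows the expected number of such errors is $\lesssim\sqrt{V}/\gamma=\sqrt{k^2/n+\gamma k}\,/\gamma$, which is $o(k)$ exactly when $\gamma^2 n\to\infty$ and $\gamma k\to\infty$, both implied by the region condition.

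It remains to upgrade these expectation bounds to high-probability bounds uniform in $i$. For this I condition on the $i$-th row $A_{i\cdot}$: given $A_{i\cdot}$ the variables $\{X_j\}_{j\neq i}$ involve disjoint edge sets and are independent, so each misclassification count is a sum of independent indicators and concentrates by Bernstein around its conditional mean; combined with concentration of $\deg(v_i)$ around its mean (needed to control the conditional means), and a union bound over the $n$ choices of $i$, this closes the argument. Assembling the three families, $\max_i|\hat{\mathcal{N}}(v_i)\triangle\mathcal{N}(v_i)|\lesssim \sqrt{k^2/n+\gamma k}\,/\gamma + o(1) = o(k)$ with high probability, which is the claim.

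I expect the main obstacle to be the bulk-non-neighbour step: one must notice that these particular statistics have the much smaller variance $k^2/n$ (not $\gamma k$), so that the effective exponent is $\min(\gamma k,\gamma^2 n)$ rather than $\gamma^2 k$, and only then does it become visible that the $\sim n^2$-fold union bound forces both $\gamma k\succ\log n$ and $\gamma^2 n\succ\log n$ --- the precise shape of region IV. The remaining work --- the $O(1)$ slack in $c(d)$, the conditioning-on-a-row trick for independence, and the degree concentration --- is routine.
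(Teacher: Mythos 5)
Your proposal follows essentially the same route as the paper's proof: Bernstein bounds on the pairwise correlations $\langle A_{i\cdot},A_{j\cdot}\rangle$, a decomposition of the mean by ring distance (common neighbours vs.\ the rest, which is exactly the paper's $|S_i\cap S_j|$, $|S_i\triangle S_j|$, $|S_i^c\cap S_j^c|$ bookkeeping), a union bound over pairs, and the observation that the two branches of the region-IV boundary come from the two variance scales $k^2/n$ and $(1-\beta)^2k$. Your mean and variance computations and the resulting conditions $\gamma k\succ\log n$, $\gamma^2 n\succ\log n$ are correct and match \eqref{eq:corr.thres}. The one step that fails as written is the claim that, conditionally on the row $A_{i\cdot}$, the statistics $\{X_j\}_{j\neq i}$ ``involve disjoint edge sets and are independent'': $X_j$ and $X_{j'}$ both contain the term indexed by the edge $\{j,j'\}$ (namely $A_{ij'}A_{jj'}$ and $A_{ij}A_{j'j}$), so they are dependent, and your Bernstein-for-the-count step does not go through directly. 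Fortunately this step is only used to sharpen the boundary error from $\sqrt{V\log n}/\gamma$ to $\sqrt{V}/\gamma$, and it is dispensable: applying to the boundary nodes the same per-pair union bound you already use for the bulk shows that every node at ring distance $k/2\pm s$ with $\gamma s\succeq\sqrt{V\log n}+\log n$ is correctly classified simultaneously for all $i$, leaving at most $O\bigl(\sqrt{V\log n}/\gamma+\log n/\gamma\bigr)=o(k)$ possible errors per node under \eqref{eq:corr.thres}. This weaker, purely union-bound version is exactly what the paper does (its $k-x_0$ is your $\sqrt{V\log n}/\gamma$), so your argument is correct once that refinement is dropped or repaired.
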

	One interesting question in small-world networks is to distinguish between strong ties (structural edges induced by the ring lattice structure) and weak ties (edges due to random connections). The above lemma addresses this question by providing a consistent estimate of the neighborhood set for each node.
	
	The condition under which consistency of correlation thresholding is ensured corresponds to the reconstructable region in Figure~\ref{fig:phase.tran}. One may ask if there is another algorithm that can provide a consistent estimate of the neighborhood set beyond region \uppercase\expandafter{\romannumeral4}. The answer is yes, and we will show in the following section that under the regime when $k$ is large (for instance, $k \succeq n^{\frac{15}{16}}$), indeed it is possible to slightly improve on Algorithm~\ref{alg:corre-thres}.

\subsection{Spectral Ordering}
\label{sec:spec-order}
Consider the following {\bf spectral ordering procedure}, which approximately reconstructs the ring lattice structure when $k$ is large, i.e., $k \succ n^{\frac{7}{8}}$.   \\

\begin{algorithm}[H] 
	\KwData{An adjacency matrix $A \in \mathbb{R}^{n\times n}$ for the graph $G(V,E)$.} 
	\KwResult{A ring embedding of the nodes $V$.} 
		1. Calculate top $3$ eigenvectors in the SVD $A = U \Sigma U^T$. Denote second and third eigenvectors as $u \in \mathbb{R}^{n}$ and $v \in \mathbb{R}^{n}$, respectively\; 
		2. For each node $i$ and the associated vector $A_{\cdot i} \in \mathbb{R}^n $, calculate the associated angle $\theta_i$ for vector $(u^T A_{\cdot i}, v^T A_{\cdot i})$\; 
	\KwOut{the sorted sequence $\{\theta_i \}_{i=1}^n$ and the corresponding ring embedding of the nodes. For each node $v_i$, $\hat{\mathcal{N}}(v_i)$ are the closest $k$ nodes in the ring embedding.} 		\caption{Spectral Reconstruction of Ring Structure}
		\label{alg:spectral} 
\end{algorithm}
\medskip

The following Lemma~\ref{lma:spectral} shows that when $k$ is large, Algorithm~\ref{alg:spectral} also provides consistent reconstruction of the ring lattice. Its computational complexity is $\mathcal{O}^*(n^2)$.

\begin{lemma}[Guarantee for Spectral Ordering]
	\label{lma:spectral}
	Consider the Watts-Strogatz graph ~${\sf WS}(n,k,\beta)$. Assume $k$ is large enough in the following sense:
	$$1> \varlimsup_{n,k(n)\rightarrow \infty} \frac{\log k}{\log n} \geq \varliminf_{n,k(n)\rightarrow \infty} \frac{\log k}{\log n} > \frac{7}{8}.$$ Under the regime 
	\begin{align}
		\label{eq:spec.ord}
		1-\beta \succ \frac{n^{3.5}}{k^4},
	\end{align}
	the spectral ordering provides consistent estimate of the neighborhood set $\mathcal{N}(v_i)$  w.h.p. in the sense that 
	\begin{align*}
		\lim_{n,k(n) \rightarrow \infty} \max_{i \in [n]} \frac{|\hat{\mathcal{N}}(v_i)	\triangle \mathcal{N}(v_i)|}{ |\mathcal{N}(v_i)|} = 0,
	\end{align*}
	where $ \triangle $ denotes the symmetric set difference.
\end{lemma}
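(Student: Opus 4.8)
The plan is to analyze the expected adjacency matrix $\mathbb{E} A = \theta \cdot P_\pi^T B P_\pi + \beta \frac{k}{n-1}(J-I)$ (with $\theta := (1-\beta)(1-\beta\frac{k}{n-1})$), decompose it spectrally, and then argue that the top three eigenvectors of $A$ are close to those of $\mathbb{E} A$ via a matrix perturbation bound, so that the planar embedding computed in step 2 of Algorithm~\ref{alg:spectral} recovers the cyclic order of the nodes up to a vanishing fraction of errors. The key structural input is that $B$ is a circulant matrix: after the trivial all-ones eigenvector (with the large eigenvalue of order $k$), its next eigenvalues come in a pair with eigenvectors given by the real and imaginary parts of the first Fourier mode $(\cos(2\pi j/n), \sin(2\pi j/n))_{j\in[n]}$. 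These are exactly the coordinates of the $n$ nodes placed evenly on a circle; the associated eigenvalue is of order $k^2/n$ (the Dirichlet-kernel value at the first frequency), and it is separated from the bulk of the remaining circulant eigenvalues by a gap also of order $k^2/n$. So on the population side, the second and third eigenvectors of $\mathbb{E} A$, after removing the $J$ component, literally encode the ring embedding, and the angle $\theta_i$ of node $i$ is its true position on the ring.

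The steps, in order: (i) Write the eigendecomposition of $\mathbb{E} A$ explicitly, identifying $\lambda_1 \asymp k$ (the $J$-direction, perturbed by $\theta$ and the $-I$), the pair $\lambda_2 = \lambda_3 \asymp \theta k^2/n$ carrying the first Fourier mode, and all remaining eigenvalues of magnitude $O(\theta k^2/n)$ but with a constant-factor spectral gap below $\lambda_3$ coming from the decay of the Dirichlet kernel (this is where the circulant/Fej\'er computation is needed). (ii) Bound the noise: $Z = A - \mathbb{E} A$ is a symmetric matrix with independent mean-zero bounded entries of variance at most $\beta\frac{k}{n-1} \lesssim k/n$ in the off-neighborhood positions and $O(1-\beta)$ inside, so by standard random-matrix bounds $\|Z\|_{\mathrm{op}} \lesssim \sqrt{k} \vee \sqrt{\log n}$ with high probability. (iii) Apply Davis--Kahan (or the Wedin $\sin\Theta$ theorem) to the two-dimensional invariant subspace spanned by the second/third population eigenvectors: the subspace rotation is controlled by $\|Z\|_{\mathrm{op}}/(\text{gap}) \lesssim \sqrt{k}/(\theta k^2/n) = n/(\theta k^{3/2})$. (iv) Translate the subspace-angle bound into a bound on the per-node angular error: the embedded points $(u^T A_{\cdot i}, v^T A_{\cdot i})$ are perturbations of points of norm $\asymp \theta k^2/(n\sqrt n)$ evenly spread on a circle, so a small $\ell_2$ subspace perturbation moves only a vanishing fraction of nodes by a non-negligible angle, and hence the sorted order of the $\theta_i$ agrees with the true cyclic order except on a vanishing fraction; the $k$-nearest-neighbor readout then gives $\max_i |\hat{\mathcal N}(v_i)\triangle \mathcal N(v_i)|/|\mathcal N(v_i)| \to 0$. (v) Chase through the arithmetic to see that the requirement for all of these bounds to be $o(1)$ simplifies, in the regime $k \succ n^{7/8}$, to the stated condition $1-\beta \succ n^{3.5}/k^4$; in particular one needs the signal eigengap $\theta k^2/n$ to dominate $\|Z\|_{\mathrm{op}} \asymp \sqrt k$ with enough room (a fourth-power, not a square, because the angular-error-to-neighborhood-error conversion costs another factor, and because $\|A_{\cdot i}\|$ fluctuations must be beaten).

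The main obstacle I expect is step (iv): a bound on the principal-angle between subspaces is a global, averaged statement, whereas the conclusion is a \emph{uniform} (max over $i$) per-node statement about where the embedded point lands and about its subsequent rank among the sorted angles. Getting from ``the embedding is $\ell_2$-close to the ideal circular embedding'' to ``every node's $k$ nearest embedded neighbors are almost exactly its true ring neighbors'' requires (a) a lower bound on the minimal angular separation / anti-concentration of the ideal points on the circle so that an $O(1)$ fraction of them is not clumped, (b) controlling the per-row projections $u^T A_{\cdot i}$ and $v^T A_{\cdot i}$ individually — this needs a row-wise / entrywise perturbation analysis (e.g. leave-one-out or the Abbe--Fan--Wang--Zhong type $\ell_{2,\infty}$ eigenvector perturbation) rather than the crude $\sin\Theta$ bound, together with concentration of $\|A_{\cdot i}\|_2$ around $\sqrt{k}$ uniformly in $i$ (a binomial tail union bound, needing $k \succeq \log n$), and (c) handling the residual sign/rotation ambiguity in the two-dimensional eigenspace, which is harmless for the \emph{unordered} neighborhood output. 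This is precisely why the regime is restricted to very large $k$ ($k\succ n^{7/8}$, tightened to $>7/8$ in the $\limsup$ sense) and why the $n^{3.5}/k^4$ threshold is not obviously tight: the extra slack absorbs the looseness of converting the subspace bound into the entrywise/uniform statement.
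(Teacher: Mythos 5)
Your overall architecture matches the paper's proof: circulant eigenstructure of $B$, a symmetrization/noncommutative-Khintchine bound giving $\|Z\|\precsim \sqrt k\vee\sqrt{\log n}$, Davis--Kahan for the second/third eigenvectors, and a per-node angular argument converting eigenvector error into neighborhood error. The paper resolves your main worry in step (iv) not by a leave-one-out or $\ell_{2,\infty}$ analysis but by an explicit sample-splitting assumption (two independent copies of $A$, one to compute $\hat U$ and one to project), which makes $\hat U$ independent of the column $A_{\cdot i}$ so that $\langle \hat U_{\cdot 2},Z_{\cdot i}\rangle$ is controlled by Hoeffding; it then gets a \emph{uniform} bound $|\hat\theta_i-\theta_i|\leq C\delta$ for every $i$, rather than a ``vanishing fraction of misplaced nodes'' statement.

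However, there is a genuine error in your step (i), and it is precisely the quantity that drives the stated thresholds. The frequency-$j$ eigenvalue of the circulant $B$ is $f(w_j)=2\sum_{i=1}^{k/2}\cos(2\pi ij/n)\approx k-\tfrac{\pi^2 j^2 k^3}{6n^2}$ for $jk\prec n$, so the second/third eigenvalue is $\asymp k$, not $\asymp k^2/n$, and --- crucially --- the gap separating the first Fourier pair from the second is $f(w_1)-f(w_2)\asymp k^3/n^2$, not a constant fraction of $\lambda_2$ and not $k^2/n$. The paper's Davis--Kahan denominator is therefore $\Delta\lambda\asymp(1-\beta)k^3/n^2$, which yields $\delta\asymp \frac{n^{2.5}}{(1-\beta)k^3}$ and a final error $n\delta/k\asymp\frac{n^{3.5}}{(1-\beta)k^4}$; this is where both the condition $1-\beta\succ n^{3.5}/k^4$ and the restriction $k\succ n^{7/8}$ come from (the latter being needed so that $(1-\beta)k^3/n^2\succ\sqrt k$ is even achievable with $1-\beta\leq 1$). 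With your claimed gap $\theta k^2/n$ the arithmetic in step (v) would produce a strictly weaker (and incorrect) threshold, and the heuristic you offer for the fourth power (``the angular-error-to-neighborhood-error conversion costs another factor'') does not account for the true source, namely the $(k/n)^2$ attenuation of the eigengap relative to the eigenvalue. You should redo the Dirichlet-kernel expansion to second order before chasing the constants.
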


	In Lemma~\ref{lma:spectral}, we can only prove consistency of spectral ordering under the technical condition that $k$ is large. We do not believe this is due to an artifact of the proof. Even though the structural matrix (the signal) has large eigenvalues, the eigen-gap is not large enough. The spectral ordering succeeds when the spectral gap stands out over the noise level, which implies that $k$ needs to be large enough. 
	
	Let us compare the region described in Equation~\eqref{eq:spec.ord} with the reconstructable region in Equation~\eqref{eq:corr.thres}. We observe that spectral ordering pushes slightly beyond the reconstructable region when $k \succ n^{\frac{15}{16}}$, as shown in Figure~\ref{fig:recon}.
	\begin{figure}[H] 
		\centering 
		\includegraphics[width=3in]{./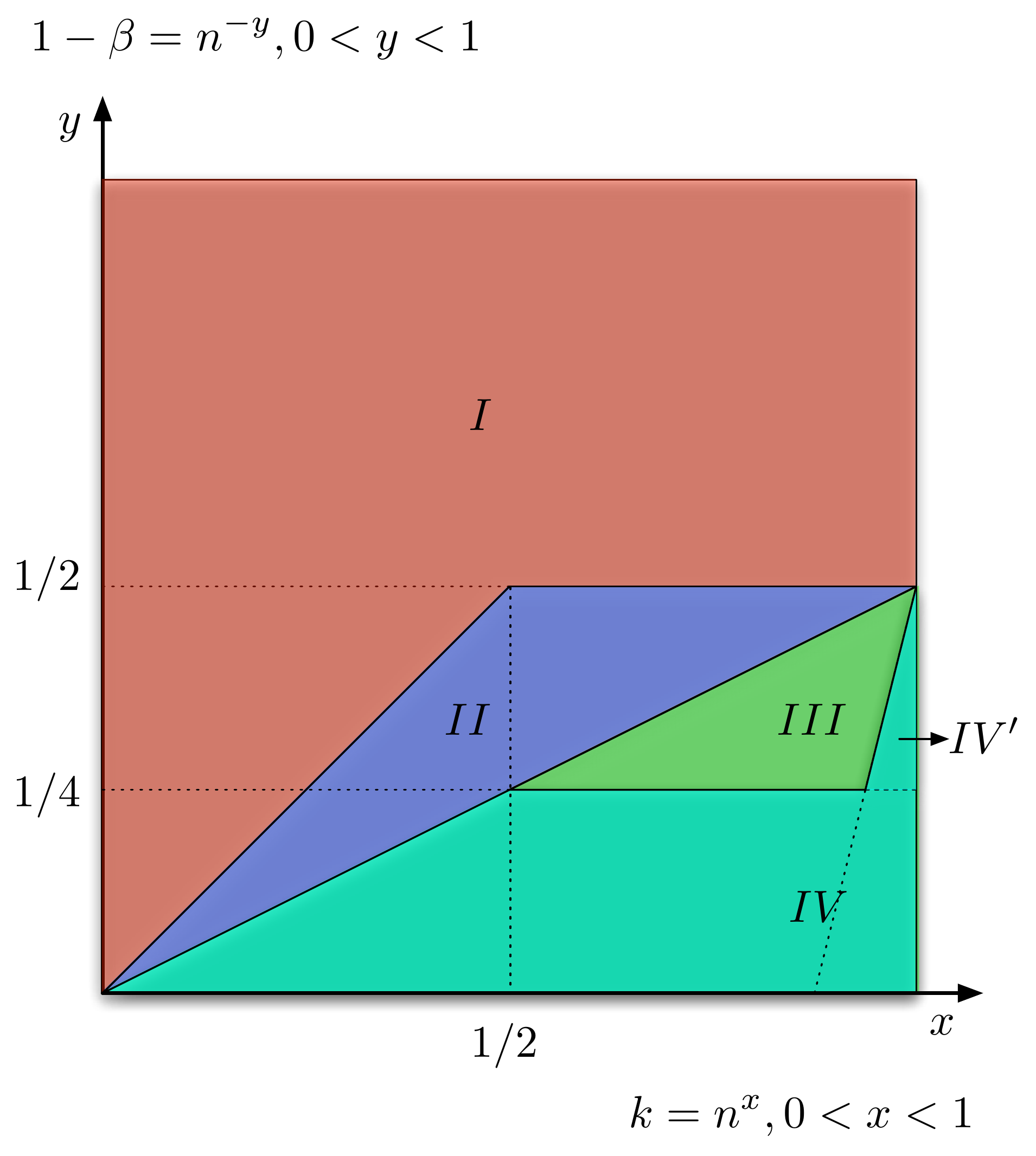} 
		\caption{Phase diagram for small-world networks: impossible region ({\color{red} red} region \uppercase\expandafter{\romannumeral1}), hard region ({\color{blue} blue} region \uppercase\expandafter{\romannumeral2}), easy region ({\color{green} green} region \uppercase\expandafter{\romannumeral3}), and reconstructable region ({\color{cyan} cyan} region \uppercase\expandafter{\romannumeral4} and \uppercase\expandafter{\romannumeral4}'). Compared to  Figure~\ref{fig:phase.tran}, the spectral ordering procedure extends the reconstructable region (\uppercase\expandafter{\romannumeral4})  when $k \succ n^{\frac{15}{16}}$ (\uppercase\expandafter{\romannumeral4}').}

		\label{fig:recon}
	\end{figure}

\section{Discussion}
\label{sec:discuss}

\paragraph{Reconstructable region}
We addressed the reconstruction problem via two distinct procedures, correlation thresholding and spectral ordering; however, whether there exists other computationally efficient algorithm that can significantly improve upon the current reconstructable region is still unknown. Designing new algorithms requires a deeper insight into the structure of the small-world model, and will probably shed light on better algorithms for mixed membership models.

\paragraph{Comparison to stochastic block model}
Recently, stochastic block models (SBM) have attracted considerable amount of attention from researchers in various fields. Community detection in stochastic block models focuses on recovering the hidden community information from the adjacency matrix that contains both noise and the latent permutation. The hidden community structure for classic SBM is illustrated in Figure~\ref{fig:sbm-sw} (the left one), as a block diagonal matrix. 
An interesting but theoretically more challenging extension to the classic SBM is the mixed membership SBM, where each node may simultaneously belong to several communities. The problem becomes more difficult when there are a growing number of communities and when each node belongs to several communities at the same time. Consider one easy case of the mix membership model, where the mix membership occurs only within neighborhood communities, as shown in the middle image of Figure~\ref{fig:sbm-sw}. The small-world network we are investigating in this paper can be seen as an extreme case (shown on the right-most figure) of this easy mixed membership SBM, where each node falls in effectively $k$ local clusters.
	\begin{figure}[H] 
		\centering 
		\includegraphics[width=5in]{./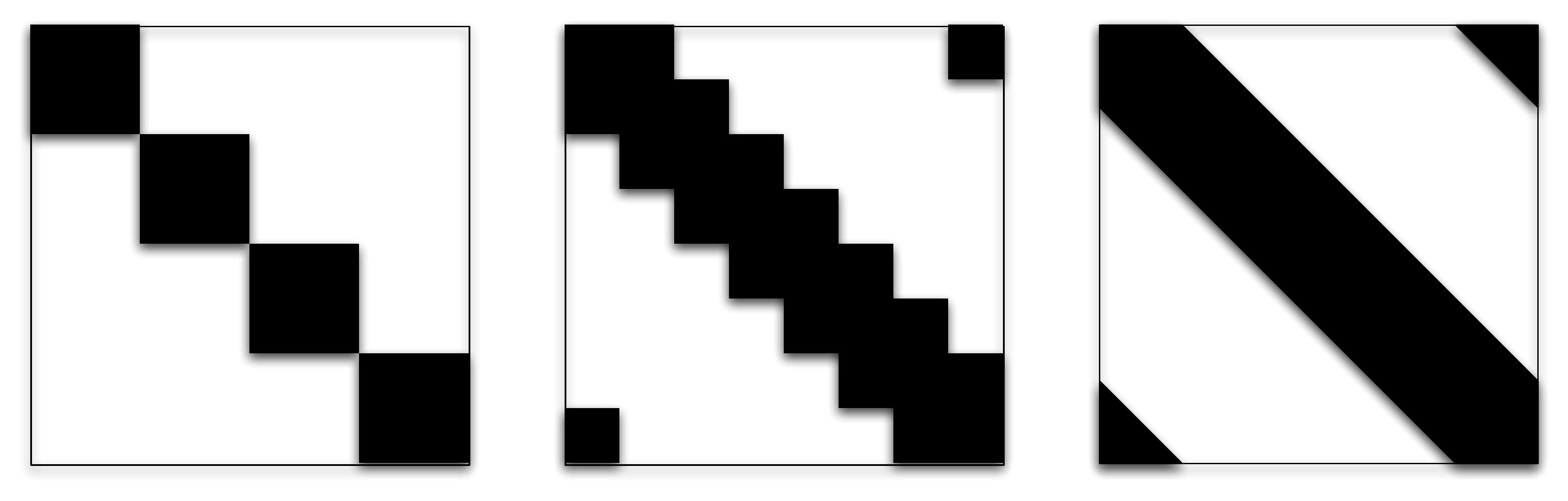} 
		\caption{The structural matrices for stochastic block model (left), mixed membership SBM (middle), and small-world model (right). The black location denotes the support of the structural matrix.}
		\label{fig:sbm-sw}
	\end{figure}
In the small-world networks, identifying the structural links and random links becomes challenging since that are many local clusters (in constrast to relative small number of communities in SBM). This multitude of local clusters makes it difficult to analyze the effect of the hidden permutation on the structural matrix. We view the current paper as an initial attempt at attacking this problem. 

\section{Technical Proofs}
\label{sec:proof}
\begin{proof}[Proof of Theorem~\ref{thm:low.bound}]

Denote the circulant matrix by $B$ (it is $B_\pi$ for any $\pi \in S_{n-1}$). The likelihood on $X \in \mathbb{R}^{n \times n}$ for WS model is
\begin{align*}
	\mathcal{L}_{n,k,\beta}(X|B) &= \exp\left\{ \log \frac{1 - \beta(1-\beta\frac{k}{n-1})}{\beta(1-\beta\frac{k}{n-1})} \cdot \langle X, B \rangle + \log \frac{\beta \frac{k}{n-1}}{1-\beta \frac{k}{n-1}} \cdot \langle X, J - I - B \rangle \right.\\
	&\quad  \quad \quad \quad \quad  \left. + nk \log (\beta(1-\beta\frac{k}{n-1})) + n(n-1-k) \log (1 - \beta \frac{k}{n-1}) \right\} \\
	&=\exp\left\{ \left( \log \frac{1 - \beta(1-\beta\frac{k}{n-1})}{\beta(1-\beta\frac{k}{n-1})} - \log \frac{\beta \frac{k}{n-1}}{1-\beta \frac{k}{n-1}} \right) \cdot \langle X, B \rangle + \log \frac{\beta \frac{k}{n-1}}{1-\beta \frac{k}{n-1}} \cdot \langle X, J - I \rangle  \right.\\
	& \quad  \quad \quad \quad \quad  \left. + nk \log (\beta(1-\beta\frac{k}{n-1})) + n(n-1-k) \log (1 - \beta \frac{k}{n-1}) \right\}.
\end{align*}
For the Erd\H{o}s-R\'{e}nyi model, the likelihood is 
\begin{align*}
	\mathcal{L}_{n,k}(X) = \exp\left\{ \log \frac{\frac{k}{n-1}}{1 - \frac{k}{n-1}} \cdot \langle X, J - I \rangle + n(n-1) \log (1- \frac{k}{n-1}) \right\} .
\end{align*}
The Kullback-Leibler divergence between this two model is expressed in the following
\begin{align}
	{\rm KL}(P_B|| P_0 ) & = \mathbb{E}_{X \sim P_B} \log\frac{P_B(X)}{P_{0}(X) } \nonumber \\
	& =  \mathbb{E}_{X \sim P_B} \left\{ - \left(\log \frac{\frac{k}{n-1}}{1 - \frac{k}{n-1}} - \log \frac{\beta \frac{k}{n-1}}{1-\beta \frac{k}{n-1}} \right) \cdot \langle X, J - I \rangle - n(n-1) \log (1- \frac{k}{n-1}) \right. \nonumber\\
	&\quad \left. + \left( \log \frac{1 - \beta(1-\beta\frac{k}{n-1})}{\beta(1-\beta\frac{k}{n-1})} - \log \frac{\beta \frac{k}{n-1}}{1-\beta \frac{k}{n-1}} \right) \cdot \langle X, B \rangle + nk \log (\beta(1-\beta\frac{k}{n-1})) + n(n-1-k) \log (1 - \beta \frac{k}{n-1}) \right\} \nonumber\\
	& = -\left(\log \frac{\frac{k}{n-1}}{1 - \frac{k}{n-1}} - \log \frac{\beta \frac{k}{n-1}}{1-\beta \frac{k}{n-1}} \right) \cdot  \left\langle (1-\beta)(1-\beta \frac{k}{n-1}) B + \beta\frac{k}{n-1} (J- I), J - I \right\rangle \nonumber\\
	& \quad + \left( \log \frac{1 - \beta(1-\beta\frac{k}{n-1})}{\beta(1-\beta\frac{k}{n-1})} - \log \frac{\beta \frac{k}{n-1}}{1-\beta \frac{k}{n-1}} \right) \cdot \left\langle (1-\beta)(1-\beta \frac{k}{n-1}) B + \beta\frac{k}{n-1} (J- I), B \right\rangle \nonumber\\
	& \quad - n(n-1) \log (1- \frac{k}{n-1}) + nk \log (\beta(1-\beta\frac{k}{n-1})) + n(n-1-k) \log (1 - \beta \frac{k}{n-1})\nonumber \\
	& = n(n-1) \log \frac{1 - \beta \frac{k}{n-1}}{ 1- \frac{k}{n-1}} - nk \log \frac{1}{\beta} -  \left[ \log \frac{1}{\beta} + \log \frac{1-\beta\frac{k}{n-1}}{1 - \frac{k}{n-1}}  \right] nk \left[1 - (1-\beta) \beta\frac{k}{n-1}\right] \nonumber \\
	& \quad \quad + \left[ \log \frac{1}{\beta} + \log \frac{1 - \beta(1-\beta \frac{k}{n-1})}{\beta \frac{k}{n-1}} \right] nk \left[1 - \beta(1-\beta \frac{k}{n-1})\right] \nonumber\\
	& = - \log \frac{1}{\beta} \cdot nk \left[ 1+\beta - \beta \frac{k}{n-1} \right] + \log \frac{1 - \beta \frac{k}{n-1}}{ 1- \frac{k}{n-1}} n\left[(n-1 - k) + (1-\beta) \beta\frac{k^2}{n-1} \right]\nonumber \\
	& \quad \quad \quad \quad \quad+ \log \frac{1 - \beta(1-\beta \frac{k}{n-1})}{\beta \frac{k}{n-1}} \cdot nk \left[1 - \beta(1-\beta \frac{k}{n-1})\right]. \label{eq:final}
\end{align}
Via the inequality $\log(1+x)<x$ for all $x>-1$, we can further simplify the above expression as
\begin{align}
	{\rm KL}(P_B|| P_0 ) & \leq nk(1-\beta)\left[ -\beta + \beta\frac{k}{n-1} + (1-\beta) \beta \frac{k^2}{n(n-1-k)} \right] + \frac{(1-\beta)(1-\beta\frac{k}{n-1})}{\beta \frac{k}{n-1}} nk \left[ (1-\beta) +\beta^2 \frac{k}{n-1} \right] \\
	& \leq nk(1-\beta) \left[(1-\beta)\beta \frac{k}{n-1} + (1-\beta) \beta \frac{k^2}{n(n-1-k)} \right] + \frac{(1-\beta)^2 (1-\beta\frac{k}{n-1})}{\beta} n(n-1) \leq C\cdot  n^2 (1-\beta)^2,
\end{align}
where $0<C<\frac{1}{2}\frac{k^2}{n(n-1)}+\frac{1}{\beta}$ is some universal constant (note we are interested in the case when $\beta$ is close to 1). 

Remark that when $k \preceq n^{1/2}$, the above bound can be further strengthened, in the following sense (recall equation~\eqref{eq:final})
\begin{align*}
	{\rm KL}(P_B|| P_0 ) & \leq nk(1-\beta)\left[ -\beta + \beta\frac{k}{n-1} + (1-\beta) \beta \frac{k^2}{n(n-1-k)} \right] + \log \frac{1 - \beta(1-\beta \frac{k}{n-1})}{\beta \frac{k}{n-1}} \cdot nk \left[1 - \beta(1-\beta \frac{k}{n-1})\right] \\
	& \leq \left\{ \log \frac{1 - \beta(1-\beta \frac{k}{n-1})}{\beta \frac{k}{n-1}} \cdot  \frac{1 - \beta(1-\beta \frac{k}{n-1})}{\beta \frac{k}{n-1}} \right\} \cdot k^2 \beta \frac{n}{n-1}.
\end{align*}
Denote $t:= \frac{1 - \beta(1-\beta \frac{k}{n-1})}{\beta \frac{k}{n-1}} = \frac{1-\beta}{\beta}\frac{n-1}{k} + \beta$. Thus we have
\begin{align}
\label{eq:kl.bd.2}
	{\rm KL}(P_B|| P_0 ) & \leq t \log t  \cdot k^2 \beta \frac{n}{n-1} .
\end{align}
Suppose for some constant $\alpha_*>0$, and $\alpha = \alpha_* \cdot \frac{1}{\beta} (1-\frac{1}{n})^2$, we have the following 
\begin{align}
& t \leq \alpha \frac{n\log \frac{n}{e}}{k^2} \cdot \frac{1}{\log \alpha \frac{n\log \frac{n}{e}}{k^2}} \label{eq:tlogt}\\
\text{and}~~~ &  t\log t \leq   \alpha \frac{n\log \frac{n}{e}}{k^2} \cdot \left( 1 - \frac{\log \log \alpha \frac{n\log \frac{n}{e}}{k^2}}{\log \alpha \frac{n\log n}{k^2}}\right) < \alpha \frac{n\log \frac{n}{e}}{k^2}  .
\end{align}
Plug in the expression for $t$ into \eqref{eq:tlogt}, if
\begin{align}
	\label{eq:key1}
\frac{1-\beta}{\beta} \leq  \alpha (1+\frac{1}{n-1}) \cdot \frac{\log \frac{n}{e}}{k}  \cdot \frac{1}{\log \alpha \frac{n\log \frac{n}{e}}{k^2}}- \frac{k}{n-1} \asymp \frac{\log n}{k} \frac{1}{\log  \frac{n\log \frac{n}{e}}{k^2}}
\end{align}
we have
$$
t \leq \alpha \frac{n\log \frac{n}{e}}{k^2} \cdot \frac{1}{\log \alpha \frac{n\log \frac{n}{e}}{k^2}}~~ \Rightarrow~~ t\log t < \alpha \frac{n\log \frac{n}{e}}{k^2} 
$$
which further implies (via equation~\eqref{eq:kl.bd.2})
\begin{align*}
	\frac{1}{(n-1)!} \sum_{\pi \in S_{n-1}} {\rm KL}(P_{B_\pi} || P_0) \leq  t \log t  \cdot k^2 \beta \frac{n}{n-1} \leq \alpha_* \cdot \log (n-1)!.
\end{align*}
Recalling Equation~\eqref{eq:final}, if
\begin{align}
	\label{eq:key2}
	1-\beta \leq \sqrt{ \frac{\alpha_*}{C} \cdot \frac{(n-1) \log \frac{n}{e} }{  n^2}}  \asymp \sqrt{\frac{\log n}{n}}
\end{align}
we have
\begin{align*}
	\frac{1}{(n-1)!} \sum_{\pi \in S_{n-1}} {\rm KL}(P_{B_\pi} || P_0) \leq  n^2 (1-\beta)^2 \leq \alpha_*\cdot \log (n-1)!.
\end{align*}
 
We invoke the following Lemma on minimax error through Kullbak-Leibler divergence.
\begin{lemma}[\cite{tsybakov2009introduction}, Proposition 2.3]
	\label{lma:fano.type}
	Let $P_0$, $P_1$,\ldots, $P_M$ be probability measures on $(\mathcal{X},\mathcal{A})$ satisfying
	\begin{align}
		\frac{1}{M} \sum_{j=1}^M {\rm KL}(P_j || P_0 )  \leq \alpha \cdot \log M
	\end{align}
	with $0<\alpha<\frac{1}{8}$. Then for any $\psi:\mathcal{X} \rightarrow [M+1]$
	\begin{align*}
		\max ~\left\{ P_0(\psi \neq 0), \frac{1}{M} \sum_{j=1}^M P_j(\psi \neq j)  \right\} \geq \frac{\sqrt{M}}{\sqrt{M}+1} \left( 1 - 2\alpha - \sqrt{\frac{2\alpha}{\log M}}\right).
	\end{align*}
\end{lemma}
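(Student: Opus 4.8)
The plan is to prove this by the \emph{Fano method}: extract information from the hypothesis $\frac1M\sum_{j=1}^M{\rm KL}(P_j\|P_0)\le\alpha\log M$ by coarsening the full divergences to binary ones, and then run a short convexity argument showing that no selector can be simultaneously right with high probability under $P_0$ and, on average, under the alternatives $P_1,\dots,P_M$.

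Concretely, I would fix a selector $\psi:\mathcal X\to\{0,1,\dots,M\}$ and set $A_j:=\{x:\psi(x)=j\}$, which partition $\mathcal X$. With $q_j:=P_j(A_j)$ and $\bar q:=\frac1M\sum_{j=1}^M q_j$, the quantity to be bounded from below is $\max\{1-q_0,\,1-\bar q\}$, so the goal is to rule out $q_0$ and $\bar q$ both being close to $1$. The data-processing inequality applied to the coarsening $x\mapsto\mathbf 1\{x\in A_j\}$ gives ${\rm KL}(P_j\|P_0)\ge{\rm kl}\bigl(q_j,P_0(A_j)\bigr)$, where ${\rm kl}(a,b):=a\log\frac ab+(1-a)\log\frac{1-a}{1-b}$ is the binary divergence. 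Averaging over $j\in\{1,\dots,M\}$, using joint convexity of ${\rm kl}(\cdot,\cdot)$, and using $\sum_{j\ge1}P_0(A_j)=1-q_0$, one obtains
\[
{\rm kl}\!\left(\bar q,\ \frac{1-q_0}{M}\right)\ \le\ \frac1M\sum_{j=1}^M{\rm KL}(P_j\|P_0)\ \le\ \alpha\log M .
\]
(Equivalently one can route through the mutual-information form of Fano's inequality: with $\theta\sim{\rm Unif}\{1,\dots,M\}$ and $X\mid\theta=j\sim P_j$, the average error is at least $1-\frac{I(\theta;X)+\log2}{\log M}$, and $I(\theta;X)=\frac1M\sum_j{\rm KL}(P_j\|\bar P)\le\frac1M\sum_j{\rm KL}(P_j\|P_0)\le\alpha\log M$ since $\bar P=\frac1M\sum_jP_j$ minimizes the average divergence.)

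It then remains to convert ${\rm kl}(\bar q,(1-q_0)/M)\le\alpha\log M$ into the stated bound. Setting $p:=\max\{1-q_0,1-\bar q\}$, monotonicity of ${\rm kl}$ in each argument off the diagonal reduces this (after a routine case split) to ${\rm kl}(1-p,\,p/M)\le\alpha\log M$; expanding the binary divergence and discarding the lower-order terms gives ${\rm kl}(1-p,p/M)\ge(1-p)\log M-c$ for an absolute constant $c$, whence $1-p\le\alpha+c/\log M$. This already produces a lower bound of the same shape as the claim (in fact slightly stronger); matching Tsybakov's exact constants $\frac{\sqrt M}{\sqrt M+1}$ and $\sqrt{2\alpha/\log M}$ requires tracking the lower-order terms carefully — the $\sqrt{2\alpha/\log M}$ correction comes from a Pinsker-type estimate on the binary divergence, and the prefactor $\frac{\sqrt M}{\sqrt M+1}$ (rather than the more natural $\frac M{M+1}$) arises from optimizing, at scale $M^{-1/2}$, a free parameter that balances the single null $P_0$ against the $M$ averaged alternatives. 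This bookkeeping is the only real obstacle; conceptually the argument is the standard Fano reduction. Since the lemma is quoted verbatim from Tsybakov, Proposition~2.3 \citep{tsybakov2009introduction}, in the paper it suffices to cite it.
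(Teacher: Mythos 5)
The paper offers no proof of this lemma at all: it is imported verbatim from \cite{tsybakov2009introduction} (Proposition~2.3) and used as a black box in the proof of Theorem~\ref{thm:low.bound}, so your closing remark that a citation suffices is exactly what the authors do. Your sketch is nonetheless a genuinely different route from Tsybakov's own argument. You run the data-processing/convexity form of Fano: coarsen each ${\rm KL}(P_j\|P_0)$ through the partition $A_j=\{\psi=j\}$, average, and land on ${\rm kl}(\bar q,(1-q_0)/M)\le\alpha\log M$; this is correct, and the monotonicity/case-split reduction to ${\rm kl}(1-p,p/M)\le\alpha\log M$ goes through (the excluded cases $p>1-1/M$ or $p>M/(M+1)$ are trivial). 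Tsybakov instead truncates the likelihood ratio at level $\tau$ (his ``bound based on two probabilities''), getting a prefactor $\tau M/(1+\tau M)$ times $\frac1M\sum_j P_j(dP_0/dP_j\ge\tau)$, controls the latter by a first-moment/Markov bound on $\log(dP_j/dP_0)$, and sets $\tau=M^{-1/2}$ --- which is precisely where $\frac{\sqrt M}{\sqrt M+1}$ and $\sqrt{2\alpha/\log M}$ come from, as you correctly diagnose. One caution: your claim that the crude Fano conclusion $p\ge 1-\alpha-c/\log M$ is ``slightly stronger'' than the stated bound is not true uniformly in $(M,\alpha)$; when $\alpha\preceq 1/\log M$ the additive loss $c/\log M$ exceeds the combined slack $\alpha+\sqrt{2\alpha/\log M}+O(M^{-1/2})$ in Tsybakov's bound, so your route proves a same-shaped but formally incomparable statement rather than the lemma as written. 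Since the paper applies the lemma with $M=(n-1)!\to\infty$ and $\alpha$ a fixed constant, either version would serve in Theorem~\ref{thm:low.bound}, but as a proof of the quoted finite-$M$ inequality with its exact constants your sketch does not close without following Tsybakov's truncation argument.
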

 
Collecting Equations~\eqref{eq:key1} and \eqref{eq:key2}, if either one of the conditions in Equations~\eqref{eq:key1} and \eqref{eq:key2} holds, we have
 \begin{align}
	 \label{eq:cond}
 \frac{1}{(n-1)!} \sum_{\pi \in S_{n-1}} {\rm KL}(P_{B_\pi} || P_0) \leq \alpha_* \cdot \log (n-1)!.
 \end{align}
 Putting things together, if$$1-\beta \prec \sqrt{\frac{\log n}{n}} \vee \frac{\log n}{k}, $$ we have that Equation~\eqref{eq:cond} hold. Applying  Lemma~\ref{lma:fano.type}, we complete the proof
$$
\varliminf_{n\rightarrow \infty} ~ \min_{\phi} ~\max~\left\{ P_0(\phi \neq 0),~ \frac{1}{(n-1)!} \sum_{i=1}^{(n-1)!} P_i(\phi \neq i) \right\} \geq \varliminf_{n\rightarrow \infty} ~  \frac{\sqrt{(n-1)!}}{1+\sqrt{(n-1)!}} \left( 1- 2\alpha - \sqrt{\frac{2\alpha}{\log (n-1)!}} \right) = 1 -2\alpha .
$$
\end{proof}

\bigskip

\begin{proof}[Proof of Lemma~\ref{lma:maxlik.test}]
	Let us state the well-known Bernstein's inequality (\cite{boucheron2013concentration}, Theorem 2.10), which will be used in the proof of this lemma. 
	\begin{lemma}[Bernstein's inequality]
		\label{lma:bern.ineq}
		Let $X_1,\ldots, X_n$ be independent bounded real-valued random variables. Assume that there exist positive numbers $v$ and $c$ such that 
		\begin{align*}
		& \sum_{i=1}^n \mbb{E} [X_i^2] \leq v,\\
		& X_i \leq 3c, \forall 1\leq i\leq n~~ a.s.
		\end{align*}
		then we have, for all $t>0$,  
		\begin{align}
		\mbb{P} \left(  \sum_{i=1}^n (X_i - \mbb{E} X_i) \geq \sqrt{2vt}+ct   \right) \leq e^{-t}.
		\end{align}
	\end{lemma}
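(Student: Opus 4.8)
The plan is to establish Bernstein's inequality by the classical Cram\'er--Chernoff (exponential moment) argument: reduce the tail bound to a one-coordinate estimate on the centered log--moment generating function, then optimize over the exponential parameter. Write $S := \sum_{i=1}^n (X_i - \mathbb{E} X_i)$. The starting point is that for every $\lambda>0$, Markov's inequality applied to $e^{\lambda S}$ gives $\mathbb{P}(S \geq u) \leq e^{-\lambda u}\,\mathbb{E}[e^{\lambda S}]$, and by independence $\mathbb{E}[e^{\lambda S}] = \prod_{i=1}^n \mathbb{E}\big[e^{\lambda (X_i - \mathbb{E} X_i)}\big]$, so everything reduces to a good upper bound for each factor, valid for $0<\lambda<1/c$.

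For the per-coordinate bound I would work with $\phi(x) := e^x - 1 - x \geq 0$. Writing $\mathbb{E}[e^{\lambda X_i}] = 1 + \lambda \mathbb{E} X_i + \mathbb{E}[\phi(\lambda X_i)]$ and using $\log(1+z)\leq z$ gives $\log \mathbb{E}\big[e^{\lambda(X_i - \mathbb{E} X_i)}\big] \leq \mathbb{E}[\phi(\lambda X_i)]$, the linear terms cancelling. To bound $\mathbb{E}[\phi(\lambda X_i)]$ I would split on the sign of $X_i$: on $\{X_i < 0\}$ one has $\phi(\lambda X_i)\leq \tfrac12 \lambda^2 X_i^2$ (since $\phi(x)\leq x^2/2$ for $x\leq 0$), while on $\{X_i\geq 0\}$, $\phi(\lambda X_i) = \tfrac12\lambda^2 X_i^2 + \sum_{q\geq 3}\lambda^q X_i^q/q!$. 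The a.s.\ bound $X_i \leq 3c$ converts into the moment-growth estimate $\mathbb{E}[(X_i)_+^q] \leq (3c)^{q-2}\mathbb{E}[X_i^2]$ for integers $q\geq 3$; combining this with the elementary inequality $q! \geq 2\cdot 3^{q-2}$ (an easy induction, with equality at $q=3$) makes the tail series geometric:
\[
\sum_{q\geq 3}\frac{\lambda^q}{q!}\,\mathbb{E}\big[(X_i)_+^q\big]\ \leq\ \mathbb{E}[X_i^2]\,\frac{\lambda^2}{2}\sum_{j\geq 1}(c\lambda)^j\ =\ \mathbb{E}[X_i^2]\,\frac{\lambda^2}{2}\cdot\frac{c\lambda}{1-c\lambda}.
\]
Adding back the $\tfrac12\lambda^2\mathbb{E}[X_i^2]$ term yields $\log \mathbb{E}\big[e^{\lambda(X_i - \mathbb{E} X_i)}\big] \leq \frac{\lambda^2 \mathbb{E}[X_i^2]}{2(1-c\lambda)}$, and summing over $i$ with $\sum_i \mathbb{E}[X_i^2]\leq v$ gives $\log \mathbb{E}[e^{\lambda S}] \leq \frac{v\lambda^2}{2(1-c\lambda)}$ for every $\lambda\in(0,1/c)$.

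It then remains to optimize the Chernoff bound $\mathbb{P}(S\geq u) \leq \exp\!\big(-\lambda u + \tfrac{v\lambda^2}{2(1-c\lambda)}\big)$ over $\lambda$. Rather than computing the convex conjugate of $\lambda\mapsto \frac{v\lambda^2}{2(1-c\lambda)}$ in closed form, I would just verify that the explicit choice $\lambda_\star := \sqrt{2t/v}\,/\,(1 + c\sqrt{2t/v})$, which lies in $(0,1/c)$, makes the exponent equal exactly $t$ when $u = \sqrt{2vt} + ct$: with $a:=\sqrt{2t/v}$ one has $1 - c\lambda_\star = (1+ca)^{-1}$ and $a^2 v = 2t$, hence $\frac{v\lambda_\star^2}{2(1-c\lambda_\star)} = \frac{t}{1+ca}$ and $\lambda_\star u = \frac{2t+act}{1+ca}$, so $\lambda_\star u - \frac{v\lambda_\star^2}{2(1-c\lambda_\star)} = \frac{(1+ac)t}{1+ca} = t$. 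This yields $\mathbb{P}(S \geq \sqrt{2vt} + ct) \leq e^{-t}$, the desired conclusion.

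The step I expect to be the main obstacle is the per-coordinate moment generating function bound, and in particular the bookkeeping that makes the one-sided hypothesis ``$X_i\leq 3c$'' sufficient: the negative part of $X_i$ enters only through $\phi(x)\leq x^2/2$ for $x\leq 0$ and is harmlessly absorbed into $\mathbb{E}[X_i^2]$, whereas the positive part is where both $X_i\leq 3c$ and the numeric inequality $2\cdot 3^{q-2}\leq q!$ are used to tame the higher moments---the constant $3$ in ``$3c$'' being exactly what reconciles the two. The remaining ingredients, the Chernoff reduction and the verification of $\lambda_\star$, are routine.
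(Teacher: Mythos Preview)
Your argument is correct: the Cram\'er--Chernoff reduction, the per-coordinate bound $\log \mathbb{E}[e^{\lambda(X_i-\mathbb{E}X_i)}]\leq \mathbb{E}[\phi(\lambda X_i)]$, the sign-splitting together with $q!\geq 2\cdot 3^{q-2}$ to handle the one-sided bound $X_i\leq 3c$, and the explicit choice $\lambda_\star$ all check out and deliver exactly $e^{-t}$. The paper itself does not prove this lemma; it simply quotes it as Theorem~2.10 of Boucheron--Lugosi--Massart, and your write-up is precisely the standard proof found there (Bernstein's moment condition plus Chernoff optimization), so there is nothing to contrast.
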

	
	First, let us consider the case when the adjacency matrix $A$ is generated from the Erd\H{o}s-R\'{e}nyi random graph ${\sf ER}(n,\frac{k}{n-1})$. Recall Bernstein's inequality Lemma~\ref{lma:bern.ineq}, for any $P_
	\pi$ with $\pi \in S_{n-1}$, we know $\langle P_\pi B P_\pi^T, A \rangle$ has the same distribution as $\langle  B, A \rangle$. Thus
	\begin{align*}
		\langle P_\pi B P_\pi^T, A \rangle &\overset{\mathrm{in~law}}{=\joinrel=} \langle  B, A \rangle =  2 \sum_{i>j}  A_{ij} B_{ij} \\
		& = 2 \sum_{i>j} \mbb{E}[A_{ij}] B_{ij}+2 \sum_{i>j}  (A_{ij} -\mbb{E}[A_{ij}]) B_{ij} \\
		& \leq \frac{k}{n-1} nk +2 \sqrt{ \frac{k}{n-1} nk t} + \frac{2}{3}t
	\end{align*}
	with probability at least $1 - \exp(-t)$. Here the last step is through Bernstein's inequality. There are $nk/2$ non-zero $B_{i,j}, i>j$, and it is clear that $A_{ij} \sim {\sf Bernoulli}(\frac{k}{n-1})$,
	$2 \sum_{i>j} \mbb{E}[A_{ij}] B_{ij} = nk \frac{k}{n-1}$. Thus we can take $c = \frac{1}{3}$ and $$
	v = \sum_{i<j} \mbb{E}[A_{ij} B_{ij}]^2 = \sum_{i<j} \mbb{E}[A_{ij}]^2  B_{ij} =\frac{nk}{2} \frac{k}{n-1}$$
	in Lemma~\ref{lma:bern.ineq}. 
	Via the union bound, take $t = \log n!$, we have
	\begin{align*}
		\max_{P_\pi}~ 	\langle P_\pi B P_\pi^T, A \rangle  \leq \frac{k}{n-1} nk + 2\sqrt{\frac{k}{n-1} nk  \cdot \log n!} + \frac{2}{3} \cdot \log n!
	\end{align*}
	with probability at least $1 - (n-1)! \exp(- \log n!) = 1 - \frac{1}{n}$. 
	
	Alternatively, suppose $A$ is from the small-world rewiring model ${\sf WS}(n,k,\beta)$, with permutation being the identity $\pi = e$. 
	With probability at least $1 - \exp(-\log n) = 1- \frac{1}{n}$,
		\begin{align*}
		\max_{P_\pi}~ 	\langle P_\pi B P_\pi^T, A \rangle &\geq 		\langle B,  A \rangle \\
		& = \langle B , \mbb{E} [A]  \rangle + \langle B , A - \mbb{E} [A]  \rangle \\
		& \geq (1-\beta+ \beta^2\frac{k}{n-1} )  nk - \sqrt{nk \cdot \log n} 
		\end{align*}
		where the last step is from Hoeffding's inequality: it is clear that for location $(i,j)$ when $B_{ij} \neq 0$, 
		$$
		\mathbb{E}[A_{ij}] = 1-\beta+\beta^2 \frac{k}{n-1},
		$$
		and $0\leq A_{ij}  \leq 1$ almost surely. 
	
		Therefore if there exist a threshold $T>0$ such that
		\begin{align}
		\label{eq:threshold}
		(1-\beta+ \beta^2\frac{k}{n-1} )  nk - \sqrt{nk \cdot \log n}  > T >   \frac{k}{n-1} nk + 2\sqrt{\frac{k}{n-1} nk  \cdot \log n!} + \frac{2}{3} \cdot \log n!
		\end{align}
		we have that
		$$
			\lim_{n, k(n) \rightarrow \infty} ~\max~\left\{ P_0(\phi_1 \neq 0),~ \frac{1}{(n-1)!} \sum_{i=1}^{(n-1)!} P_i(\phi_1 \neq 1) \right\}  \leq 	\lim_{n, k(n) \rightarrow \infty} ~\frac{1}{n} = 0.
		$$

	The detailed calculation of Equation~\eqref{eq:threshold} yields that the test succeeds with high probability whenever
	\begin{align*}
		1-\beta \succeq \sqrt{\frac{\log n}{n}} \vee \frac{\log n}{k}.
	\end{align*}
\end{proof}

\begin{proof}[Proof of Lemma~\ref{lma:spec.test}]
	 Under the rewiring model (Watts-Strogatz model) ${\sf WS}(n,k,\beta)$ with permutation $P_\pi$
	\begin{align*}
		P_\pi A P_\pi^T = (1- \beta)(1-\beta\frac{k}{n-1})\cdot B + \beta\frac{k}{n-1} \cdot (J-I) + Z 
	\end{align*}
	where $J = 1 1^T \in \mathbb{R}^{n\times n}$, $B$ is the ring structured signal matrix defined in Equation~\eqref{eq:structure.mat}.  We denote in short $A = \mathbb{E} A + Z$ as this signal and the noise part, and
	\begin{equation*}
		B_{ij} = \left\{
		\begin{array}{lc}
			1 & \text{if}~ 0<|i-j| \leq \frac{k}{2} \mod n-1-\frac{k}{2} \\
			0 & \text{elsewhere}
		\end{array}
		\right.
	\end{equation*}
	$B$ is a circulant matrix, whose spectrum is highly structured, and 
	$Z$ is a zero-mean noise random matrix. 
	
	We first study the random fluctuation part, $Z = A - \mathbb{E} A$. 
	Let us bound the expectation $\mbb{E} \| A - \mbb{E} A \|$ as a starting step, for any adjacency matrix $A \in \mbb{R}^{n \times n}$ using the symmetrization trick. Denote $A'\sim A$ as the independent copy of A sharing the same distribution. Take $E, G \in \mbb{R}^{n \times n}$ as random symmetric Rademacher and Gaussian matrices with entries $E_{ij}$, $G_{ij}$ being, respectively, independent Rademacher and Gaussian. Denoting $A \circ B$ as matrix Hadamard product, we have
	\begin{align*}
		\mbb{E} \| A - \mbb{E} A\| & = \mbb{E} \sup_{ \| v \|_{\ell_2} =1} \langle (A - \mbb{E}A) v, v \rangle = \mbb{E} \sup_{ \| v \|_{\ell_2} =1} \langle (A - \mbb{E}_{A'}A') v, v \rangle \\
		& \leq \mbb{E}_{A} \mbb{E}_{A'} \sup_{ \| v \|_{\ell_2} =1} \langle (A - A') v, v \rangle =  \mbb{E}_{E} \mbb{E}_{A} \mbb{E}_{A'} \sup_{ \| v \|_{\ell_2} =1} \langle [E \circ (A - A')] v, v \rangle  \\
		& \leq \mbb{E}_{A} \mbb{E}_{E}  \sup_{ \| v \|_{\ell_2} =1} \langle [E \circ A]  v, v \rangle +  \mbb{E}_{A'} \mbb{E}_{E}  \sup_{ \| v \|_{\ell_2} =1} \langle [-E \circ A'] v, v \rangle  \\
		& = 2 \mbb{E}_{A} \mbb{E}_{E}  \sup_{ \| v \|_{\ell_2} =1} \langle [E \circ A]  v, v \rangle  \leq \frac{2}{\sqrt{2/\pi}}  \cdot \mbb{E}_{A} \mbb{E}_{E}  \sup_{ \| v \|_{\ell_2} =1} \langle [ \mbb{E}_G[|G|] \circ E \circ A]  v, v  \rangle \\
		& \leq \sqrt{\frac{\pi}{2}} \cdot \mbb{E}_{A} \mbb{E}_{E}  \mbb{E}_G  \sup_{ \| v \|_{\ell_2} =1} \langle [|G| \circ E \circ A]  v, v  \rangle = \sqrt{\frac{\pi}{2}} \cdot \mbb{E}_{A} \mbb{E}_G  \sup_{ \| v \|_{\ell_2} =1} \langle [G \circ A]  v, v  \rangle \\
		& =  \sqrt{\frac{\pi}{2}} \cdot \mbb{E}_{A} \left( \mbb{E}_G \| G \circ A \| \right).
	\end{align*}
	Recall the following Lemma from \cite{bandeira2014sharp}.
	\begin{lemma}[\cite{bandeira2014sharp}, Theorem 1.1] Let $X$ be the $n \times n$ symmetric random matrix with $X = G \circ A$, where $G_{ij}, i<j$ are i.i.d. $N(0,1)$ and $A_{ij}$ are given scalars. Then
	$$
	\mbb{E}_{G} \| X \| \leq \max_{i} \sqrt{\sum_j A^2_{ij}} + \max_{ij} |A_{ij}| \cdot \sqrt{\log n}.
	$$
	\end{lemma}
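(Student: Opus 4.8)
The plan is to prove the bound by the trace moment method and then optimize the moment order. Write $\sigma_* := \max_i\sqrt{\sum_j A_{ij}^2}$ and $\sigma_\infty := \max_{ij}|A_{ij}|$; recall $X = G\circ A$ is symmetric with independent entries $X_{ij}\sim N(0,A_{ij}^2)$ for $i\le j$. For any integer $p\ge 1$, since $\|X\|^{2p}\le{\rm tr}\,X^{2p}$,
\[
\mathbb{E}\|X\|\;\le\;\bigl(\mathbb{E}\,{\rm tr}\,X^{2p}\bigr)^{1/(2p)},\qquad
\mathbb{E}\,{\rm tr}\,X^{2p}=\sum_{i_0,\dots,i_{2p-1}}\mathbb{E}\!\left[\prod_{t=0}^{2p-1}X_{i_ti_{t+1}}\right],\quad i_{2p}:=i_0,
\]
a sum over closed walks of length $2p$ in the complete graph on $[n]$. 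The whole proof then lives in the analysis of this sum, and at the end I would take $p\asymp\varepsilon^{-1}\log n$ for a free parameter $\varepsilon>0$.

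First I would apply the Gaussian (Wick/Isserlis) identity: since the $G$'s are centered and independent, the expectation over a fixed closed walk vanishes unless every edge $\{a,b\}$ is traversed an \emph{even} number of times, say $2m_{ab}$, in which case it equals $\prod_{\{a,b\}}(2m_{ab}-1)!!\,A_{ab}^{2m_{ab}}$. Thus $\mathbb{E}\,{\rm tr}\,X^{2p}$ reduces to a nonnegative sum over these ``even'' closed walks, each weighted by one factor $A_{ab}^2$ per traversal, the traversals being matched up in pairs by the Wick combinatorics.

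The core step is the combinatorial estimate of this sum in terms of $\sigma_*$ and $\sigma_\infty$. I would organize even closed walks by their underlying graph: a walk visiting $v$ vertices with $s$ distinct edges satisfies $s\ge v-1$, and I would bound the contribution of a ``tree'' step (a step discovering a new vertex, after summing its $A^2$-weight over that fresh index) by $\sigma_*^2$ — this is exactly where $\max_i\sum_jA_{ij}^2$ enters — while each ``surplus'' edge (there are $s-(v-1)$ of them) and each surplus traversal ($\sum_{ab}(m_{ab}-1)$ of them) is bounded instead by $\sigma_\infty^2$ at the cost of only polynomially-in-$p$ extra choices. Counting tree skeletons by Catalan numbers and encoding the surplus structure in the standard way, I expect a bound of the schematic form $\mathbb{E}\,{\rm tr}\,X^{2p}\le n\sum_{\ell\ge0}C_\ell(p)\,\sigma_*^{2p-2\ell}\sigma_\infty^{2\ell}$, with $C_0(p)$ essentially the $p$-th Catalan number and $C_\ell(p)$ polynomial in $p$ for each fixed $\ell$. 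Taking $(2p)$-th roots with $p\asymp\varepsilon^{-1}\log n$ makes $n^{1/(2p)}\le1+\varepsilon$, turns the Catalan factor into $2(1+o(1))$, turns each surplus penalty into $\sigma_\infty\sqrt p\asymp C_\varepsilon\sigma_\infty\sqrt{\log n}$, and summing over $\ell$ yields $\mathbb{E}\|X\|\le(1+\varepsilon)\,2\sigma_*+C_\varepsilon\,\sigma_\infty\sqrt{\log n}$, which is the stated bound up to an absolute constant (the bare ``$1$'' in the displayed lemma should be read as a universal constant; the refined form $(1+\varepsilon)2\sigma_*+C_\varepsilon\sigma_\infty\sqrt{\log n}$ is exactly what \cite{bandeira2014sharp} proves and what the spectral-test application uses).

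I expect the combinatorial step to be the main obstacle. A naive walk count, or the route through the non-commutative Khintchine inequality, only gives $\mathbb{E}\|X\|\lesssim\sigma_*\sqrt{\log n}$, with a spurious logarithmic factor on the leading term; the entire point of the lemma is that this factor attaches only to $\sigma_\infty$. Removing it requires a clean separation of the spanning (``tree'') part of each even closed walk, which must be summed with \emph{no} extra combinatorial penalty so that the coefficient of $\sigma_*$ stays sharp, from the surplus part, where a polynomial-in-$p$ penalty is harmless because it multiplies the smaller $\sigma_\infty$. Producing the \emph{additive} — rather than multiplicative — form of the bound out of this dichotomy is the delicate point, and is precisely where the peeling argument of \cite{bandeira2014sharp} does its work; I would follow that peeling scheme rather than look for a self-contained shortcut.
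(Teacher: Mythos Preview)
The paper does not prove this lemma at all: it is quoted verbatim as Theorem~1.1 of \cite{bandeira2014sharp} and used as a black box inside the proof of Lemma~\ref{lma:spec.test}. There is therefore no ``paper's own proof'' to compare against; the only relevant question is whether your sketch is a correct outline of the Bandeira--van Handel argument, and it is. Your trace--moment scheme with Wick expansion, the tree/surplus dichotomy for even closed walks (tree steps summed freely to produce $\sigma_*^2$, surplus edges and repeated traversals capped by $\sigma_\infty^2$ with a polynomial-in-$p$ combinatorial cost), and the choice $p\asymp\log n$ are exactly the ingredients of \cite{bandeira2014sharp}, and you correctly flag that the sharp additive form --- $\sigma_*$ without a $\sqrt{\log n}$ factor --- is where the peeling argument is essential. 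Your remark that the displayed constants in the lemma should be read as universal constants is also accurate: the precise statement in \cite{bandeira2014sharp} carries an $(1+\varepsilon)$ and a $C_\varepsilon$, which the present paper suppresses because only the order $\sqrt{k}\vee\sqrt{\log n}$ is needed downstream.
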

	Thus via Jensen's inequality and the above Lemma, we can continue
	\begin{align*}
		\mbb{E} \| A - \mbb{E} A\| &\leq  \sqrt{\frac{\pi}{2}} \cdot \mbb{E}_{A} \left( \mbb{E}_G \| G \circ A \| \right) \\
		& \precsim  \mbb{E}_{A} \max_{i} \sqrt{\sum_j A^2_{ij}} + \max_{ij} |A_{ij}| \cdot \sqrt{\log n} \\
		& \leq \sqrt{ \mbb{E}_{A} \max_{i} \sum_j A^2_{ij} } + \sqrt{\log n}\\
		& \leq \sqrt{k +C_1 2\sqrt{k\log n} + C_2 \log n} + \sqrt{\log n} \asymp \sqrt{k} \vee \sqrt{\log n},
	\end{align*}
	where the last step uses Bernstein inequality Lemma~\ref{lma:bern.ineq}. Moving from expectation $\mbb{E} \| A - \mbb{E} A\| $ to concentration on $\| A - \mbb{E} A\| $ is through Talagrand's concentration inequality (see, \cite{talagrand1996new} and \cite{tao2012topics} Theorem 2.1.13), since $\| \cdot \|$ is $1-$Lipschitz convex function in our case (and the entries are bounded), thus with probability at least $1 - \frac{1}{n}$,
	$$
	\| A - \mbb{E} A\|  \leq   \mbb{E} \| A - \mbb{E} A\| + C \cdot \sqrt{\log n} \asymp \sqrt{k} \vee \sqrt{\log n}.
	$$

	Now let us study the structural signal part. Matrix $B$ is of the form circulant matrix, the associated polynomial is $$ f(x) = (x + x^{n-k/2}) \cdot \frac{x^{k/2}-1}{x-1}. $$ The eigen-structure is: collect for all $j = 0, 1, ..., n/2$ $$ (\cos 0,\cos \frac{2\pi j}{n},\cos \frac{2\pi 2j}{n},\ldots, \cos \frac{2\pi nj}{n} ) $$ and $$ (\sin 0,\cos \frac{2\pi j}{n},\sin \frac{2\pi 2j}{n},\ldots, \sin \frac{2\pi nj}{n} ) $$ and the corresponding eigenvalue is $$\lambda_j = f(w_j) = 2\sum_{i=1}^{k/2} \cos \left( i \frac{2\pi j}{n} \right).$$

	Let us first assume $\frac{k}{n}\leq \frac{1}{2}$, thus $\lambda$ is the second largest eigenvalue $$ \lambda = 2\sum_{i=1}^{k/2} \cos \left( i \frac{2\pi}{n} \right) = \frac{2 \sin\frac{k \pi}{2n}}{\sin\frac{\pi}{n}} \cos\frac{(k+2)\pi}{2n} \asymp k .$$ 

Using Weyl's interlacing inequality, if there exist a $T>0$ such that
$$
\lambda_2(A_{\sf WS}) \geq \lambda_2(\mbb{E}[A_{\sf WS}]) - \| Z \| > T > \|Z' \| > \lambda_2(A_{\sf ER}) ,
$$
where
\begin{align*}
 \lambda_2(M) - \| Z \|  &\geq 	(1- \beta)(1-\beta\frac{k}{n-1}) \lambda  -\sqrt{k} \vee \sqrt{\log n}, \\
 \| Z'\| &\leq \sqrt{k} \vee \sqrt{\log n},
\end{align*}
then we have
$$
	\lim_{n, k(n) \rightarrow \infty} ~\max~\left\{ P_0(\phi_2 \neq 0),~ \frac{1}{(n-1)!} \sum_{i=1}^{(n-1)!} P_i(\phi_2 \neq 1) \right\} = 0.
$$
Therefore, we have the condition for which the second eigenvalue test succeeds:
\begin{align*}
	(1- \beta)(1-\beta\frac{k}{n-1}) \lambda & >  \sqrt{k} \vee \sqrt{\log n}\\
	(1- \beta)(1-\beta\frac{k}{n-1}) & > \frac{\sqrt{k \log n} \vee \log n }{\frac{2 \sin\frac{k \pi}{2n}}{\sin\frac{\pi}{n}} \cos\frac{(k+2)\pi}{2n}} \asymp \sqrt{ \frac{1}{k} } \vee  \frac{\sqrt{\log n}}{k}.
\end{align*}
\end{proof}

\bigskip

\begin{proof}[Proof of Lemma~\ref{lma:corr-thres}]
Take any two vectors $A_{i \cdot}, A_{j \cdot}$ that are two rows of the adjacency matrix. Denote the $i,j$-th rows have distance $|\pi^{-1}(i) - \pi^{-1}(j)|_{\rm ring} = x$. This is equivalent to saying that the Hamming distance of the corresponding signal vectors satisfies ${\rm H}(B_{i \cdot}, B_{j \cdot}) = 2x-2$. Therefore the union of signal nodes for $i,j$-th row is of cardinality $|S_i \cup S_j| = k+x-1$, common signal nodes is of cardinality $|S_i \cap S_j| = k - x +1$, unique signal is of cardinality $|S_i \triangle S_j| =  2x-2$, and $|S_i^c \cap S_j^c| = n - k - x - 1$.  The signal nodes is $1$ with probability $p =  1- \beta (1-\beta\frac{k}{n-1})$, non signal is $1$ with probability $q = \beta\frac{k}{n-1}$, and we have
\begin{align*}
	\langle A_{i \cdot}, A_{j \cdot} \rangle = \sum_{l \in S_{i} \cap S_{j}} A_{il} A_{jl} + \sum_{l \in S_{i} \triangle S_{j}} A_{il} A_{jl}  + \sum_{l \in S_i^c \cap S_j^c} A_{il} A_{jl}.
\end{align*} 
Observe as long as $l \neq i,j$, $A_{il}$ and $A_{jl}$ are independent, and $\left\{ A_{il}A_{jl}, l \in [n]\backslash \{i,j\} \right\}$ are independent of each other.  

Let us bound each term via Bernstein's inequality  Lemma~\ref{lma:bern.ineq},
\begin{align*}
	\sum_{l \in S_{i} \cap S_{j}} A_{il} A_{jl} \in p^2 |S_{i} \cap S_{j}| \pm \left(\sqrt{2p^2 |S_{i} \cap S_{j}| t} + \frac{1}{3} t \right) \\
	\sum_{l \in S_{i} \triangle S_{j}} A_{il} A_{jl} \in pq |S_{i} \triangle S_{j}| \pm \left(\sqrt{2pq |S_{i} \triangle S_{j}| t} + \frac{1}{3} t \right) \\
	\sum_{l \in S_{i}^c \cap S_{j}^c} A_{il} A_{jl} \in q^2 |S_{i}^c \cap S_{j}^c| \pm \left(\sqrt{2q^2 |S_{i}^c \cap S_{j}^c| t} + \frac{1}{3} t \right)
\end{align*}
with probability at least $1 - 6\exp(-t)$. We take $t = (2+\epsilon) \log n$ for any $\epsilon>0$, such that with probability at least $1 - C n^{-\epsilon}$, the above bound holds for all pairs $(i,j)$.

Thus for all $|\pi^{-1}(i) - \pi^{-1}(j)|_{\rm ring} > k$ pairs, 
\begin{align*}
	\langle A_{i \cdot}, A_{j \cdot} \rangle \leq 2k pq + (n-2k-2) q^2 + \left( \sqrt{4kpqt} + \sqrt{2(n-2k-2)q^2t} + t \right),
\end{align*}
for $|\pi^{-1}(i) - \pi^{-1}(j)|_{\rm ring}\leq x$ pairs
\begin{align*}
	\langle A_{i \cdot}, A_{j \cdot} \rangle \geq (k - x +1)p^2 + (2x-2) pq + (n-k-x-1) q^2 - \left(\sqrt{2(k - x +1)p^2t} + \sqrt{2(2x-2) pqt} + \sqrt{2 (n-k-x-1)q^2t}  + t\right).
\end{align*}

Thus, with $t = (2+\epsilon) \log n$, $p = 1 - \beta (1-\beta\frac{k}{n-1})$ and $q = \beta\frac{k}{n-1}$, if $x<x_0$ with
$$\frac{x_0}{k} := 1 - C_1 \sqrt{\frac{\log n}{k}} \frac{1}{1-\beta} - C_2  \sqrt{\frac{\log n}{n}} \frac{1}{(1-\beta)^2} ,$$ we have
\begin{align*}
	 (k-x+1)(p-q)^2 & \geq 2t + (2\sqrt{2}+1) \left( \sqrt{kp^2}  + \sqrt{nq^2} \right) \sqrt{2t} \\
	 & \geq 2t + \left( \sqrt{2kpq} + \sqrt{(n-2k-2)q^2} + \sqrt{(k - x +1)p^2} + \sqrt{(2x-2) pq} + \sqrt{ (n-k-x-1)q^2} \right) \sqrt{2t},
\end{align*}
which further implies,
\begin{align*}
	 \min_{j: |\pi^{-1}(i) - \pi^{-1}(j)|_{\rm ring} \leq x_0}~~ \langle A_{i \cdot}, A_{j \cdot} \rangle &  \geq \max_{ j \notin \mathcal{N}(v_i)}~~\langle A_{i \cdot}, A_{j \cdot} \rangle, \forall i \\
	  \max_{i \in [n]} \frac{|\hat{\mathcal{N}}(v_i)	\triangle \mathcal{N}(v_i)|}{ |\mathcal{N}(v_i)|} & \leq \frac{k-x_0}{k} =   C_1 \sqrt{\frac{\log n}{k}} \frac{1}{1-\beta} + C_2  \sqrt{\frac{\log n}{n}} \frac{1}{(1-\beta)^2}.
\end{align*}
Therefore we can reconstruct the neighborhood consistently, under the condition
$$
1-\beta \succ \sqrt{\frac{\log n}{k}} \vee \left( \frac{\log n}{n} \right)^{1/4}.
$$ 
\end{proof}

\bigskip
\begin{proof}[Proof of Lemma~\ref{lma:spectral}]
Since eigen-structure is not affected by permutation, we will work under the case when the true permutation is identity. We work under a mild technical assumption that we have two independent observation of the adjacency matrix, one used for calculated the eigen-vector, the other used for projection. Note this is only a technical assumption for simplicity, and does not affect the theoretical result.
Recall that $A = M + Z$, where $M = (1- \beta)(1-\beta\frac{k}{n-1})\cdot B + \beta\frac{k}{n-1} \cdot (J-I) $ is the signal matrix. Denote the eigenvectors of $M$ to be $U \in \mathbb{R}^{n \times n}$, and eigenvectors of $A$ to be $\hat{U}\in \mathbb{R}^{n \times n}$. Classic Davis-Kahan perturbation bound informs us that 
	\begin{align*}
		\| \hat{U}_{\cdot 2} - U_{\cdot 2} \| \leq \frac{ \| Z \| }{ \Delta \lambda - \| Z \|}, ~~ \| \hat{U}_{\cdot 3} - U_{\cdot 3} \| \leq \frac{ \| Z \| }{ \Delta \lambda - \| Z \|},
	\end{align*}
where the spectral gap $\Delta \lambda$ of $M$ is
\begin{align*}
\Delta \lambda &:= (1- \beta)(1-\beta\frac{k}{n-1})\cdot( \lambda_2 - \lambda_3 )=(1- \beta)(1-\beta\frac{k}{n-1}) \cdot\left[ 2\sum_{i=1}^{k/2} \cos \left( i \frac{2\pi}{n} \right) - 2\sum_{i=1}^{k/2} \cos \left( i \frac{2\pi \cdot 2}{n} \right)  \right] \\
 & = (1- \beta)(1-\beta\frac{k}{n-1}) \left[ \frac{2 \sin\frac{k \pi}{2n}}{\sin\frac{\pi}{n}} \cos\frac{(k+2)\pi}{2n} - \frac{2 \sin\frac{k \pi}{n}}{\sin\frac{2\pi}{n}} \cos\frac{(k+2)\pi}{n} \right]  \asymp (1- \beta)(1-\beta\frac{k}{n-1}) \frac{k^3}{n^2}.
\end{align*}
From the proof of Lemma~\ref{lma:spec.test}, we know with high probability
$$
\| Z \| \preceq \sqrt{k} \vee \sqrt{\log n}.
$$

We denote
	\begin{align*}
		\tan \hat{\theta}_i  = \frac{\hat{U}_{\cdot 3}^T A_{\cdot i}}{\hat{U}_{\cdot 2}^T A_{\cdot i}},
	\end{align*}
	and
	\begin{align*}
		\tan \theta_i = \frac{\langle U_{\cdot 3}, M_{\cdot i} \rangle}{\langle U_{\cdot 2}, M_{\cdot i} \rangle}  =  \frac{\frac{\lambda_2}{\sqrt{n}} \sin \frac{(i-1)2\pi}{n}   }{ \frac{\lambda_2}{\sqrt{n}} \cos \frac{(i-1)2\pi}{n} } = \tan \frac{(i-1)2\pi}{n}.
	\end{align*}
	Observe that
	\begin{align}
		\label{eq:tan}
		\tan \hat{\theta}_i &= \frac{\langle \hat{U}_{\cdot 3}, A_{\cdot i} \rangle }{\langle \hat{U}_{\cdot 2}, A_{\cdot i} \rangle},
	\end{align}
	and for both the denominator and numerator, we have the bound
	\begin{align*}
		\langle \hat{U}_{\cdot 2}, A_{\cdot i} \rangle &=\langle (\hat{U}_{\cdot 2} - U_{\cdot 2}) + U_{\cdot 2}, M_{\cdot i} + Z_{\cdot i} \rangle  \\
		&= \langle U_{\cdot 2}, M_{\cdot i} \rangle + \langle \hat{U}_{\cdot 2} - U_{\cdot 2}, M_{\cdot i} \rangle + \langle \hat{U}_{\cdot 2} , Z_{\cdot i} \rangle \\
		& \leq U_{\cdot 2}, M_{\cdot i} \rangle + \| \hat{U}_{\cdot 2} - U_{\cdot 2} \| \| M_{\cdot i} \| +| \langle \hat{U}_{\cdot 2} , Z_{\cdot i} \rangle |.
	\end{align*}
	Thus we know
	\begin{align}
	& \max\left\{ | \langle \hat{U}_{\cdot 2}, A_{\cdot i} \rangle  - \langle U_{\cdot 2}, M_{\cdot i} \rangle |, | \langle \hat{U}_{\cdot 3}, A_{\cdot i} \rangle  - \langle U_{\cdot 3}, M_{\cdot i} \rangle | \right\}	\\
	&\leq  \max\left\{  \| \hat{U}_{\cdot 2} - U_{\cdot 2} \| \| M_{\cdot i} \| +| \langle \hat{U}_{\cdot 2} , Z_{\cdot i} \rangle |,  \| \hat{U}_{\cdot 3} - U_{\cdot 3} \| \| M_{\cdot i} \| +| \langle \hat{U}_{\cdot 3} , Z_{\cdot i} \rangle |\right\} \\
		&\leq  \frac{ \sqrt{k} \vee \sqrt{\log n}}{\lambda_2 - \lambda_3 - \sqrt{k} \vee \sqrt{\log n} }\cdot  \sqrt{k} (1-\beta) + \sqrt{\log n} 
	\end{align}
	where the last line follows from the definition of principal angle and Davis-Kahan bound and Hoeffding's inequality for $ \langle \hat{U}_{\cdot 2} , Z_{\cdot i} \rangle$. Proceeding with Equation~\eqref{eq:tan}, without loss of generality, assume $0\leq \frac{(i-1)2\pi}{n} \leq \frac{\pi}{2}$, for $0\leq \frac{(i-1)2\pi}{n} \leq \frac{\pi}{4}$ 
	\begin{align}
		\tan \hat{\theta}_i 
		& \leq \frac{\frac{\lambda_2}{\sqrt{n}} \sin \frac{(i-1)2\pi}{n}  + \frac{ \sqrt{k} \vee \sqrt{\log n} }{\lambda_2 - \lambda_3 - \sqrt{k} \vee \sqrt{\log n} }\cdot \sqrt{k} (1-\beta) + \sqrt{\log n}  }{ \frac{\lambda_2}{\sqrt{n}} \cos \frac{(i-1)2\pi}{n}  - \frac{ \sqrt{k} \vee \sqrt{\log n} }{\lambda_2 - \lambda_3 - \sqrt{k} \vee \sqrt{\log n} }\cdot \sqrt{k} (1-\beta)  - \sqrt{\log n} } \\
		& = \frac{ \sin \frac{(i-1)2\pi}{n} + \delta}{  \cos \frac{(i-1)2\pi}{n} - \delta},\quad \text{with $\delta = \frac{\sqrt{n}}{\lambda_2} \cdot \left\{  \frac{ \sqrt{k} \vee \sqrt{\log n} }{\lambda_2 - \lambda_3 - \sqrt{k} \vee \sqrt{\log n} }\cdot \sqrt{k} (1-\beta) + \sqrt{\log n} \right\} $} \\
		\tan \hat{\theta}_i  & \geq  \frac{ \sin \frac{(i-1)2\pi}{n} - \delta}{  \cos \frac{(i-1)2\pi}{n} + \delta},
	\end{align}
	with similar bounds for $\cot \hat{\theta}_i$ with $\frac{\pi}{4} \leq \frac{(i-1)2\pi}{n} \leq \frac{\pi}{2}$.  
	Here the stochastic eror is bounded in the sense $\delta \asymp \frac{n^{2.5}}{k^3}\frac{1}{1-\beta} \rightarrow 0$.
	From the above equation, we have
	\begin{align*}
		| \hat{\theta}_i - \theta_i | &\leq \min\{ | \tan \hat{\theta}_i - \tan \theta_i|, |\cot\hat{\theta}_i - \cot \theta_i | \} \\
		& \leq \min\left\{ \frac{\delta(1+\tan \theta_i)}{\cos \theta_i - \delta}, \frac{\delta(1+\cot \theta_i)}{\sin \theta_i - \delta} \right\}  \leq \frac{2\delta}{\frac{1}{\sqrt{2}}-\delta}.
	\end{align*}
	For any $i$,  we have the bound on the stochastic error $| \hat{\theta}_i - \theta_i | < C \cdot \delta \asymp  \frac{n^{2.5}}{k^3}\frac{1}{1-\beta} $. And for all $j \in \mathcal{N}(i)$ in the neighborhood, the support is $| \theta_j - \theta_i | \leq \frac{2\pi k }{n}$. Fix any $i$, for any $j \notin \mathcal{N}(v_i)$,
	$$
	\min_{j \notin \mathcal{N}(v_i) }~~|\hat{\theta}_{j} - \hat{\theta}_i| \geq  \frac{2\pi k }{n} - C \delta >  (\frac{2\pi k }{n} - C' \delta) + C \delta \geq \max_{|j - i|<k-C' n\delta} |\hat{\theta}_{j} - \hat{\theta}_i|,
	$$
	with $C \leq 4\sqrt{2}$ and $C'>2C$.
	Therefore, the following bound on symmetric set difference holds
	$$
	 \max_{i \in [n]} \frac{|\hat{\mathcal{N}}(v_i)	\triangle \mathcal{N}(v_i)|}{ |\mathcal{N}(v_i)|} \leq  \frac{ C' \cdot n\delta}{k}  \leq \frac{C' \cdot n \cdot \frac{n^{2.5}}{k^3}\frac{1}{1-\beta}}{k}.
	$$

	In summary under the condition
	$$1 - \beta \succ \frac{n^{3.5}}{k^4},$$
	one can recover the neighborhood consistently w.h.p. in the sense
	$$
	\lim_{n,k(n) \rightarrow \infty} \max_{i \in [n]} \frac{|\hat{\mathcal{N}}(v_i)	\triangle \mathcal{N}(v_i)|}{ |\mathcal{N}(v_i)|} = 0.
	$$ 
	
\end{proof}
\section*{acknowledgement}
The authors thank Elchanan Mossel for many helpful discussions and
suggestions for improving the paper.

\bibliographystyle{plainnat} 
\bibliography{bibfile}

\end{document}